\documentclass[12pt,reqno]{article}

\usepackage[usenames]{color}
\usepackage{amssymb}
\usepackage{amsmath}
\usepackage{amsthm}
\usepackage{amsfonts}
\usepackage{amscd}
\usepackage{graphicx}

\usepackage[colorlinks=true,
linkcolor=webgreen,
filecolor=webbrown,
citecolor=webgreen]{hyperref}

\definecolor{webgreen}{rgb}{0,.5,0}
\definecolor{webbrown}{rgb}{.6,0,0}

\usepackage{color}
\usepackage{fullpage}
\usepackage{float}

\usepackage{graphics}
\usepackage{latexsym}

\setlength{\textwidth}{6.5in}
\setlength{\oddsidemargin}{.1in}
\setlength{\evensidemargin}{.1in}
\setlength{\topmargin}{-.1in}
\setlength{\textheight}{8.4in}

\DeclareMathOperator{\arctanh}{arctanh}
\DeclareMathOperator{\Li}{Li}
\DeclareMathOperator{\Cl}{Cl}
\DeclareMathOperator{\Gl}{Gl}


\begin{document}

\theoremstyle{plain}
\newtheorem{theorem}{Theorem}
\newtheorem{corollary}[theorem]{Corollary}
\newtheorem{lemma}{Lemma}
\newtheorem*{example}{Examples}
\newtheorem*{remark}{Remark}

\begin{center}
\vskip 1cm{\LARGE\bf 
A series of Ramanujan, two-term dilogarithm identities and some Lucas series \\
}
\vskip 1cm
{\large

Kunle Adegoke \\
Department of Physics and Engineering Physics \\ Obafemi Awolowo University, Ile-Ife \\ Nigeria \\
\href{mailto:adegoke00@gmail.com}{\tt adegoke00@gmail.com}

\vskip 1cm

Robert Frontczak\footnote{Statements and conclusions made in this article by R. F. are entirely those of the author. 
They do not necessarily reflect the views of LBBW.} \\
Landesbank Baden-W\"urttemberg (LBBW), Stuttgart \\ Germany \\
\href{mailto:robert.frontczak@lbbw.de}{\tt robert.frontczak@lbbw.de}
}

\end{center}

\vskip .2 in

\begin{abstract}
We study an elementary series that can be considered a relative of a series studied by Ramanujan in Part 1 of his Lost Notebooks. 
We derive a closed form for this series in terms of the inverse hyperbolic arctangent and the polylogarithm. 
Special cases will follow in terms of the Riemann zeta and the alternating Riemann zeta function. 
In addition, some trigonometric series will be expressed in terms of the Clausen functions.
Finally, a range of new two-term dilogarithm identities will be proved and some difficult series involving Lucas numbers 
will be evaluated in closed form.  
\end{abstract}

\noindent 2010 {\it Mathematics Subject Classification}: Primary 33B99; Secondary 40C99.

\noindent \emph{Keywords:} Inverse hyperbolic arctangent, Polylogarithm, Riemann zeta function, Clausen functions, Fibonacci (Lucas) number.

\bigskip

\section{Introduction}

In Part 1 of Ramanujan's Lost Notebooks \cite{Berndt1} Ramanujan studied the function
\begin{equation*}
\varphi(a,n) = 1 + 2 \sum_{k=1}^n \frac{1}{(ak)^3-ak}, \qquad \varphi(a) = \lim_{n\rightarrow\infty}\varphi(a,n).
\end{equation*}
Among other things Ramanujan proved the identities
\begin{equation}\label{Rama1}
\ln (2) = \frac{1}{2} \varphi (2) = \frac{1}{2} + \sum_{k=1}^\infty \frac{1}{(2k-1)(2k)(2k+1)}
\end{equation}
and
\begin{equation}
\frac{3}{2} \ln (2) = \varphi (4) = 1 + \sum_{k=1}^\infty \frac{2}{(4k-1)(4k)(4k+1)}.
\end{equation}
Ramanujan also considered the alternating variant of $\varphi(a)$, i.e., the function
\begin{equation*}
\widetilde{\varphi}(a) = 1 + 2 \sum_{k=1}^\infty \frac{(-1)^k}{(ak)^3-ak}.
\end{equation*}
He obtained \cite[p. 40/41]{Berndt1}
\begin{equation}\label{Rama2}
\ln (2) = \widetilde{\varphi}(2) \qquad\mbox{and}\qquad \frac{4}{3} \ln (2) = \widetilde{\varphi}(3).
\end{equation}

The function $\varphi(a,n)$ has also been studied recently. It appears in an article by Berndt and Huber from 2008 \cite{Berndt2} 
who derive a new formula for the Euler-Maccheroni constant $\gamma$. In addition, we mention the article by Ravichandran 
from 2004 \cite{Ravi} where the function $\varphi(a)$ has been analyzed (under the notation $A_n$). \\

In this article, we study a relative of Ramanujan's function $\varphi(a)$, namely the series
\begin{equation}
F(z,p) = \sum_{k=1}^\infty \frac{z^{2k}}{(2k-1)(2k)^p (2k+1)}, \qquad z\in\mathbb{C}, p\geq 0.
\end{equation}
We express the series in closed form using the inverse hyperbolic arctangent, $\arctanh (z)$, 
and the polylogarithm of order $n$, $\Li_n (z)$, the later being defined by \cite{Lewin}
\begin{equation*}
\Li_n (z) = \sum_{k=1}^\infty \frac{z^k}{k^n}, \qquad |z|<1. 
\end{equation*}
At $z=1$ we evaluate the series in terms of the Riemann zeta function given by
\begin{equation*}
\zeta (s) = \sum_{k=1}^{\infty} \frac{1}{k^{s}}, \qquad \Re(s) >1.
\end{equation*}
We also show that at $z=i,i=\sqrt{-1},$ $F(z,p)$ can be expressed in terms of alternating zeta, 
or Dirichlet's eta, or Euler's eta function, $\eta(s)$, defined by
\begin{equation*}
\eta(s) = \sum_{k=1}^\infty \frac{(-1)^{k+1}}{k^s}, \quad \Re(s)>0.
\end{equation*}
Other difficult series will follow as particular cases. One such series is
\begin{equation*}
\sum_{k=1}^\infty \frac{1}{2^{k+3}(2k-1) k^3 (2k+1)} = \frac{3}{2\sqrt{2}} \ln(1+\sqrt{2}) + \frac{\pi^2 \ln(2)}{96} 
- \frac{1}{2} - \frac{\ln(2)}{2} - \frac{\ln^3 (2)}{48} - \frac{7}{64} \zeta(3).
\end{equation*}
In addition, as corollaries to our main theorem, we will express some trigonometric series in terms of the Clausen functions.
Finally, we prove a range of new two-term dilogarithm identities and evaluate some difficult 
series involving Lucas numbers $L_n.$

\section{The main result}

Our approach is completely elementary and builds mainly on properties of the inverse hyperbolic arctangent $\arctanh (z)$.
For complex arguments $z$, $\arctanh (z)$ is defined by \cite{Abram}
\begin{equation*}
\arctanh (z) = \int_0^z \frac{dt}{1-t^2}.
\end{equation*}
It is a multivalued function with a branch cut in the complex plane. It is an odd function related to the inverse tangent via
\begin{equation*}
\arctanh (z) = - i \arctan (iz), \qquad i=\sqrt{-1}.
\end{equation*}
The logarithmic representation of the inverse hyperbolic arctangent is given by
\begin{equation}
\arctanh (z) = \frac{1}{2}\ln \Big ( \frac{1+z}{1-z}\Big ), \qquad \vert z\vert < 1.
\end{equation}
It is also well known that it possess a Maclaurin series expansion of the form
\begin{equation}\label{macl}
\arctanh (z) = \sum_{k=1}^\infty \frac{z^{2k-1}}{2k-1}, \qquad \vert z\vert < 1.
\end{equation}
Recent articles on the function include \cite{Chen,Frontczak,Zhu}. 

To prove our main result we will need the following lemmas.

\begin{lemma}\label{lem1}
We have the following limits:
\begin{equation*}
\lim_{z \rightarrow 0} \left (z + \frac{1}{z}\right ) \arctanh (z) = 1 \qquad \mbox{and}\qquad 
\lim_{z\rightarrow 0} \left (z - \frac{1}{z}\right ) \arctanh (z) = -1.
\end{equation*}
\end{lemma}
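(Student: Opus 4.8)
The plan is to derive both limits directly from the Maclaurin expansion \eqref{macl}, which converges in a neighborhood of the origin and hence legitimizes all term-by-term manipulations there. The single building block I need is the elementary limit $\lim_{z\to 0}\arctanh(z)/z = 1$. First I would factor out the leading term and write
\[
\frac{\arctanh(z)}{z} = \sum_{k=1}^\infty \frac{z^{2k-2}}{2k-1} = 1 + \frac{z^2}{3} + \frac{z^4}{5} + \cdots,
\]
so that this limit is immediate: the displayed power series is continuous at $z=0$ with value $1$ (equivalently, one may apply L'Hôpital's rule to $\arctanh(z)/z$). The companion fact $\lim_{z\to 0} z\,\arctanh(z) = 0$ is evident from the same expansion.

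With these two observations in hand, I would split each product into its two natural pieces. For the first limit,
\[
\left(z + \frac{1}{z}\right)\arctanh(z) = z\,\arctanh(z) + \frac{\arctanh(z)}{z} \longrightarrow 0 + 1 = 1,
\]
and for the second the identical decomposition gives
\[
\left(z - \frac{1}{z}\right)\arctanh(z) = z\,\arctanh(z) - \frac{\arctanh(z)}{z} \longrightarrow 0 - 1 = -1.
\]

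Since the argument rests only on the convergent power series near the origin, there is no genuine obstacle here. The sole point requiring a word of care is that $\arctanh$ is multivalued with a branch cut; but on a punctured disc about the origin we work on the principal branch, where \eqref{macl} holds and the function is analytic, so the limits are independent of the direction in which $z\to 0$ in $\mathbb{C}$.
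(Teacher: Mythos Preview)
Your argument is correct and essentially matches the paper's: the authors simply invoke l'H\^opital's rule, which amounts to the same computation $\lim_{z\to 0}\arctanh(z)/z=1$ that you obtain from the Maclaurin series (and you even note the l'H\^opital alternative yourself). The added remark about the branch choice is harmless and not needed here, since the statement is local at the origin.
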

\begin{proof}
Both limits are proved easily using the rule of l'Hospital.
\end{proof}

\begin{lemma}\label{lem2}
The following expressions are valid:
\begin{equation*}
\int \frac{1}{z} \left (1 + \left (z - \frac{1}{z}\right ) \arctanh (z)\right ) dz 
= \ln (1-z^2) + \left (z + \frac{1}{z}\right ) \arctanh (z) + c,
\end{equation*}
\begin{equation*}
\int \frac{1}{z} \left (-1 + \left (z + \frac{1}{z}\right ) \arctanh (z)\right ) dz = \left (z - \frac{1}{z}\right ) \arctanh (z) + c,
\end{equation*}
\begin{equation*}
\int \frac{ \ln (1-z^2)}{z} dz = - \frac{1}{2} \Li_2(z^2) + c
\end{equation*}
and
\begin{equation*}
\int \frac{\Li_n(z^2)}{z} dz = \frac{1}{2} \Li_{n+1}(z^2) + c.
\end{equation*}
\end{lemma}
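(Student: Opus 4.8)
The plan is to verify each of the four integral identities in Lemma~\ref{lem2} by differentiating the claimed right-hand side and checking that it equals the integrand, which is the cleanest way to establish an indefinite integral. For the \emph{third} and \emph{fourth} identities this is essentially immediate: differentiating $-\tfrac12\Li_2(z^2)$ gives $-\tfrac12\cdot\tfrac{-\ln(1-z^2)}{z^2}\cdot 2z=\tfrac{\ln(1-z^2)}{z}$ using $\Li_2'(w)=-\ln(1-w)/w$, and similarly differentiating $\tfrac12\Li_{n+1}(z^2)$ produces $\Li_n(z^2)/z$ via the defining relation $\Li_{n+1}'(w)=\Li_n(w)/w$ together with the chain rule. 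These two follow directly from the series definition of $\Li_n$ given in the excerpt.

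For the first two identities I would also differentiate the right-hand sides, using the elementary derivative $\frac{d}{dz}\arctanh(z)=\tfrac{1}{1-z^2}$. Differentiating $\bigl(z+\tfrac1z\bigr)\arctanh(z)$ gives $\bigl(1-\tfrac1{z^2}\bigr)\arctanh(z)+\bigl(z+\tfrac1z\bigr)\tfrac{1}{1-z^2}$, and the second factor simplifies since $\bigl(z+\tfrac1z\bigr)\tfrac{1}{1-z^2}=\tfrac{z^2+1}{z(1-z^2)}$; combined with the derivative $\tfrac{-2z}{1-z^2}$ of $\ln(1-z^2)$, the rational pieces should collapse to $\tfrac1z$ and the $\arctanh$ piece to $\tfrac1z\bigl(z-\tfrac1z\bigr)\arctanh(z)$, matching the integrand of the first identity. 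The second identity is checked the same way, and here the rational terms cancel so that only $\tfrac1z\bigl(z+\tfrac1z\bigr)\arctanh(z)-\tfrac1z$ survives.

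The only subtlety is that differentiation alone establishes the identities up to the additive constant $c$ already present in each statement, so no separate constant-fixing argument is needed; it also suffices to work formally on the common domain $|z|<1$ where all the series converge, avoiding the branch-cut issues mentioned earlier for $\arctanh$. The main thing to watch is the bookkeeping in the first two identities, where terms like $\tfrac1{z^2}\arctanh(z)$ and the rational fractions must be combined carefully so that the $\arctanh$-coefficients and the purely rational parts each match the integrand separately. This algebraic simplification is the only place where an error could creep in, but it is routine partial-fraction manipulation rather than a genuine obstacle.
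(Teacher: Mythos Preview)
Your proposal is correct and follows exactly the approach the paper indicates: the paper's proof simply states that the integrals ``are derived by standard methods or verified directly by differentiation,'' and your verification by differentiating each right-hand side (with the algebraic simplifications you outline) is precisely that. No gaps or differences to note.
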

\begin{proof}
The integrals are derived by standard methods or verified directly by differentiation. 
\end{proof}

\begin{theorem}\label{thm_1}
For any integer $p\geq 0$ and $|z|< 1$ we have
\begin{equation}\label{main_1}
F(z,p) = \begin{cases}
	\frac{1}{2} \left ( \left (z - \frac{1}{z}\right ) \arctanh (z) + 1 - \sum_{j=1}^{p/2} 2^{-(2j-1)} \Li_{2j}(z^2) \right ), 
	& \text{\rm $p$ even;} \\[16pt]
	\frac{1}{2} \left ( \left (z + \frac{1}{z}\right ) \arctanh (z) - 1 + \ln(1-z^2) - \sum_{j=1}^{(p-1)/2} 2^{-2j} \Li_{2j+1}(z^2) \right ), & \text{\rm $p$ odd.} 
\end{cases} 
\end{equation}
\end{theorem}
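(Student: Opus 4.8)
The plan is to reduce the whole family $\{F(z,p)\}_{p\ge 0}$ to a single integral recursion in $p$ and then induct. The key observation is that dividing the summand of $F(z,p)$ by $z$ and integrating term by term from $0$ to $z$ raises the exponent on $(2k)$ by one, since $\int_0^z t^{2k-1}\,dt = z^{2k}/(2k)$. This yields
\[
F(z,p+1) = \int_0^z \frac{F(t,p)}{t}\,dt, \qquad p\geq 0,
\]
the interchange of sum and integral being legitimate because the power series for $F(t,p)$ converges uniformly on compact subsets of the unit disk. This identity is the engine of the proof: once the base case is known, every larger $p$ follows by repeated integration, and Lemma~\ref{lem2} supplies exactly the four antiderivatives that arise.

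For the base case $p=0$ I would use the partial fraction decomposition $\frac{1}{(2k-1)(2k+1)} = \frac{1}{2}\big(\frac{1}{2k-1}-\frac{1}{2k+1}\big)$ together with the Maclaurin expansion \eqref{macl}. The first piece gives $\sum_{k\ge1} z^{2k}/(2k-1) = z\arctanh(z)$ and the second gives $\sum_{k\ge1} z^{2k}/(2k+1) = \arctanh(z)/z - 1$, so that
\[
F(z,0) = \frac{1}{2}\Big(\big(z-\tfrac{1}{z}\big)\arctanh(z) + 1\Big),
\]
which is precisely the even-$p$ formula with the empty sum $\sum_{j=1}^{0}$. This anchors the induction.

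The inductive step splits according to the parity of $p$. If the even formula holds at $p$, I apply the recursion and integrate using the first and fourth integrals of Lemma~\ref{lem2}: the combination $1+(z-\tfrac1z)\arctanh(z)$ turns into $\ln(1-z^2)+(z+\tfrac1z)\arctanh(z)$, while each $\Li_{2j}(z^2)$ becomes $\tfrac12\Li_{2j+1}(z^2)$, producing the odd formula at $p+1$. Symmetrically, if the odd formula holds at $p$, the second, third, and fourth integrals of Lemma~\ref{lem2} send $-1+(z+\tfrac1z)\arctanh(z)$ to $(z-\tfrac1z)\arctanh(z)$, the term $\ln(1-z^2)$ to $-\tfrac12\Li_2(z^2)$, and each $\Li_{2j+1}(z^2)$ to $\tfrac12\Li_{2j+2}(z^2)$, reassembling the even formula at $p+1$ once the index shift $2j+2=2(j+1)$ merges the shifted sum with the freshly produced $\Li_2$ term.

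The one point that genuinely requires care, and the main obstacle, is the evaluation of the antiderivatives at the lower limit $z=0$. The terms $\big(z\pm\tfrac1z\big)\arctanh(z)$ are individually singular there, so the definite integral cannot be taken naively; this is exactly where Lemma~\ref{lem1} enters, giving $\lim_{z\to0}(z+\tfrac1z)\arctanh(z)=1$ and $\lim_{z\to0}(z-\tfrac1z)\arctanh(z)=-1$. These finite limits supply the constant terms $+1$ and $-1$ in the two cases, while $\ln(1-z^2)$ and all polylogarithms vanish at $z=0$. Keeping the constants and the summation indices aligned across the parity switch is the only real bookkeeping; everything else is a mechanical application of Lemma~\ref{lem2}.
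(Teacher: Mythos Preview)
Your proposal is correct and follows essentially the same route as the paper: induction on $p$ via the integral recursion $F(z,p+1)=\int_0^z F(t,p)/t\,dt$, with Lemma~\ref{lem2} supplying the antiderivatives and Lemma~\ref{lem1} handling the lower endpoint. The only cosmetic difference is that the paper establishes $p=0$ and $p=1$ separately before the general step, whereas you (slightly more economically) treat $p=1$ as the first instance of the even-to-odd induction.
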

\begin{proof}
We prove the identities using induction on the parameter $p$. We start with the base cases $p=0$ and $p=1$, respectively. 
Employing \eqref{macl} we get
\begin{eqnarray*}
\Big (z - \frac{1}{z}\Big ) \arctanh (z) & = & \sum_{k=1}^\infty \frac{z^{2k}}{2k-1} - \sum_{k=1}^\infty \frac{z^{2k-2}}{2k-1} \\
& = & \sum_{k=1}^\infty \frac{z^{2k}}{2k-1} - 1 - \sum_{k=1}^\infty \frac{z^{2k}}{2k+1} \\
& = & -1 + 2 \sum_{k=1}^\infty \frac{z^{2k}}{(2k-1)(2k+1)}.
\end{eqnarray*}
This proves the base case $p=0$. Next, we have
\begin{equation*}
\sum_{k=1}^\infty \frac{z^{2k}}{(2k-1)(2k)(2k+1)} = \sum_{k=1}^\infty \int_0^z \frac{t^{2k-1}\, dt}{(2k-1)(2k+1)} 
= \int_0^z \frac{1}{2t} \Big (1 + \Big (t - \frac{1}{t}\Big ) \arctanh (t) \Big ) dt. 
\end{equation*}
Using the first integral in Lemma \ref{lem2} we get
\begin{equation*}
\sum_{k=1}^\infty \frac{z^{2k}}{(2k-1)(2k)(2k+1)} = \frac{1}{2} \Big ( \Big (z + \frac{1}{z}\Big ) \arctanh (z) - 1 + \ln(1-z^2) \Big )
\end{equation*}
as desired. Now let $p\geq 2$ be arbitrary and even. Then $p+1$ is odd and using Lemmas \ref{lem1} and \ref{lem2}
\begin{eqnarray*}
\sum_{k=1}^\infty \frac{z^{2k}}{(2k-1)(2k)^{p+1} (2k+1)} & = & 
\int_0^z \frac{1}{2t} \Big (1 + \Big (t - \frac{1}{t}\Big ) \arctanh (t) - \sum_{j=1}^{p/2} \frac{1}{2^{2j-1}} \Li_{2j}(t^2) \Big ) dt \\
& = & \frac{1}{2} \Big (\Big (z + \frac{1}{z}\Big ) \arctanh (z) - 1 + \ln(1-z^2) - \sum_{j=1}^{p/2} \frac{1}{2^{2j}} \Li_{2j+1}(z^2) \Big )
\end{eqnarray*}
and the statement is proved. Similarly, if $p$ is odd then $p+1$ is even and
\begin{eqnarray*}
\sum_{k=1}^\infty \frac{z^{2k}}{(2k-1)(2k)^{p+1} (2k+1)} & = & 
\int_0^z \frac{1}{2t} \Big (\Big (t + \frac{1}{t}\Big ) \arctanh (t) - 1 + \ln(1-t^2) \\
& & \qquad - \sum_{j=1}^{(p-1)/2} \frac{1}{2^{2j}} \Li_{2j+1}(t^2) \Big ) dt \\
& = & \frac{1}{2} \Big (\Big (z - \frac{1}{z}\Big ) \arctanh (z)  + 1 - \frac{1}{2}\Li_2(z^2) 
- \sum_{j=1}^{(p-1)/2} \frac{1}{2^{2j+1}} \Li_{2j+2}(z^2) \Big ) \\
& = & \frac{1}{2} \Big (\Big (z - \frac{1}{z}\Big ) \arctanh (z)  + 1 - \sum_{j=1}^{(p+1)/2} \frac{1}{2^{2j-1}} \Li_{2j}(z^2) \Big ). 
\end{eqnarray*}
\end{proof}

In particular, for $|z|<1$, we have
\begin{equation}\label{eq.ajavxbw}
F(z,1)=\sum_{k = 1}^\infty  {\frac{{z^{2k} }}{{(2k - 1)(2k)(2k + 1)}}}  = \frac{1}{2}\ln (1 - z^2 ) + \frac{1}{2}\left( {z + \frac{1}{z}} \right)\arctanh(z)-\frac12,
\end{equation}
\begin{equation}\label{eq.ekwvw5d}
F(z,2)=\sum_{k = 1}^\infty \frac{{z^{2k} }}{{(2k - 1)(2k)^2 (2k + 1)}} = -\frac14 \Li_2 (z^2) - \frac12\left( {\frac{{1 - z^2}}{z^2}} \right) z \arctanh (z)
 + \frac12.
\end{equation}
and
\begin{equation}\label{eq.uu2c0rv}
\begin{split}
F(z,3)=\sum_{k = 1}^\infty \frac{{z^{2k} }}{{(2k - 1)(2k)^3(2k + 1)}}  &= - \frac{1}{4} \frac{(1 - z)^2 }{z} \ln (1 - z) 
+ \frac{1}{4}\frac{{(1 + z)^2 }}{z}\ln (1 + z) \\
&\qquad - \frac{1}{8} \Li_3 (z^2 ) - \frac{1}{2}.
\end{split}
\end{equation}

\begin{corollary}\label{cor1}
We have for $p\geq 0$ 
\begin{eqnarray}\label{ex1}
F(1,p) & = & \sum_{k=1}^\infty \frac{1}{(2k-1)(2k)^{p} (2k+1)} \nonumber \\
& = & \begin{cases}
	\frac{1}{2} - \sum_{j=1}^{p/2} 2^{-2j} \zeta(2j), & \text{\rm $p$ even;} \\[16pt]
	-\frac{1}{2} + \ln(2) - \sum_{j=1}^{(p-1)/2} 2^{-(2j+1)} \zeta(2j+1), & \text{\rm $p$ odd.} 
\end{cases} 
\end{eqnarray}
\end{corollary}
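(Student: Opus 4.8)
The plan is to derive Corollary~\ref{cor1} by letting $z\to 1^-$ in the two closed forms provided by Theorem~\ref{thm_1}. First I would justify the passage to the limit on the left-hand side: the series $\sum_{k\ge1}1/((2k-1)(2k)^p(2k+1))$ converges for every $p\ge0$ (it is dominated by a constant multiple of $\sum_k k^{-2}$), so Abel's theorem for power series gives $\lim_{z\to1^-}F(z,p)=F(1,p)$. By the same token, since $\zeta(n)=\sum_k k^{-n}$ converges for each integer $n\ge2$, Abel's theorem also yields $\lim_{z\to1^-}\Li_n(z^2)=\Li_n(1)=\zeta(n)$. Every polylogarithm appearing in \eqref{main_1} has order $\ge2$, so all of them have the expected limits, and the whole problem reduces to computing the limits of the remaining elementary (non-polylogarithmic) terms.

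For even $p$ the only delicate term is $\bigl(z-\tfrac1z\bigr)\arctanh(z)$. Writing $z-\tfrac1z=(z^2-1)/z$ and using $\arctanh(z)=\tfrac12\ln\frac{1+z}{1-z}$, near $z=1$ this factor behaves like $(z-1)\bigl(\ln2-\ln(1-z)\bigr)$, which tends to $0$ because $x\ln x\to0$ as $x\to0^+$ (equivalently, one may apply l'Hospital directly). Passing to the limit in the even branch of \eqref{main_1} then leaves $\tfrac12\bigl(1-\sum_{j=1}^{p/2}2^{-(2j-1)}\zeta(2j)\bigr)$; absorbing the outer factor $\tfrac12$ into the sum turns $2^{-(2j-1)}$ into $2^{-2j}$ and produces exactly $\tfrac12-\sum_{j=1}^{p/2}2^{-2j}\zeta(2j)$.

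The odd case is where the real work lies, and I expect it to be the main obstacle: here both $\bigl(z+\tfrac1z\bigr)\arctanh(z)$ and $\ln(1-z^2)$ diverge as $z\to1^-$, so they must be combined before taking the limit. The key idea is to write $z+\tfrac1z=2+(z-1)^2/z$ and to split $\arctanh(z)=\tfrac12\bigl(\ln(1+z)-\ln(1-z)\bigr)$. The part carrying the factor $(z-1)^2/z$ contributes only a multiple of $(z-1)^2\ln(1-z)\to0$, so that $\bigl(z+\tfrac1z\bigr)\arctanh(z)=\ln(1+z)-\ln(1-z)+o(1)$. Adding $\ln(1-z^2)=\ln(1-z)+\ln(1+z)$ then cancels the $\ln(1-z)$ singularities, and the surviving part tends to $2\ln2$.

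Finally, I would assemble these limits. In the odd branch of \eqref{main_1} the bracket tends to $2\ln2-1-\sum_{j=1}^{(p-1)/2}2^{-2j}\zeta(2j+1)$; multiplying by $\tfrac12$ gives $-\tfrac12+\ln2-\sum_{j=1}^{(p-1)/2}2^{-(2j+1)}\zeta(2j+1)$, which is precisely the odd case of \eqref{ex1}. Together with the even computation this establishes the corollary. No step beyond the logarithmic cancellation in the odd case is genuinely subtle; the convergence issues are all routine applications of Abel's theorem.
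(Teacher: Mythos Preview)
Your proposal is correct and follows essentially the same approach as the paper: let $z\to 1^-$ in Theorem~\ref{thm_1}, use $\Li_n(1)=\zeta(n)$, and evaluate the two limits $\lim_{z\to1}(z-1/z)\arctanh(z)=0$ and $\lim_{z\to1}\bigl((z+1/z)\arctanh(z)+\ln(1-z^2)\bigr)=2\ln 2$. You simply supply more justification than the paper (the Abel-theorem argument and the explicit cancellation via $z+1/z=2+(z-1)^2/z$), whereas the paper merely states the two limits without derivation.
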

\begin{proof}
Take the limit $z\rightarrow 1$ and make use of the results $\Li_n(1) = \zeta(n)$ as well as 
\begin{equation*}
\lim_{z\rightarrow 1} \left (z - \frac{1}{z}\right ) \arctanh (z) = 0 \quad \mbox{and}\quad 
\lim_{z\rightarrow 1} \left ( \left (z + \frac{1}{z}\right ) \arctanh (z) + \ln(1-z^2) \right ) = 2 \ln(2).
\end{equation*}
\end{proof}

\begin{corollary}\label{cor2}
We have for $p\geq 0$ 
\begin{eqnarray}\label{ex2}
F(i,p) & = & \sum_{k=1}^\infty \frac{(-1)^k}{(2k-1)(2k)^{p} (2k+1)} \nonumber \\
& = & \begin{cases}
	\frac{1}{2} - \frac{\pi}{4} + \sum_{j=1}^{p/2} 2^{-2j} \eta (2j), & \text{\rm $p$ even;} \\[16pt]
	- \frac{1}{2} + \frac{\ln(2)}{2} + \sum_{j=1}^{(p-1)/2} 2^{-(2j+1)} \eta (2j+1), & \text{\rm $p$ odd,} 
\end{cases} 
\end{eqnarray}
where $\eta(s)$ denotes the Dirichlet eta function.
\end{corollary}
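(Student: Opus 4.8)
The plan is to obtain Corollary~\ref{cor2} by evaluating the closed forms of Theorem~\ref{thm_1} at $z=i$. Since $i^{2k}=(-1)^k$, the left-hand side of \eqref{main_1} becomes exactly the alternating series $\sum_{k=1}^\infty (-1)^k/((2k-1)(2k)^p(2k+1))$, so the whole task reduces to computing the right-hand side of \eqref{main_1} at $z=i$. The first thing I would record is that $z^2=-1$ there, whence every polylogarithm collapses via $\Li_n(-1)=\sum_{k\ge 1}(-1)^k/k^n=-\eta(n)$; this is precisely what turns the minus signs in \eqref{main_1} into the plus signs appearing in \eqref{ex2}.

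Next I would compute the two $\arctanh$-terms. Using either the logarithmic representation or the relation $\arctanh(z)=-i\arctan(iz)$, one finds $\arctanh(i)=i\pi/4$. For the even branch, $z-1/z=i-1/i=2i$, so $(z-1/z)\arctanh(z)=2i\cdot i\pi/4=-\pi/2$; substituting this together with $\Li_{2j}(-1)=-\eta(2j)$ into the even case of \eqref{main_1} and using $\frac12\cdot 2^{-(2j-1)}=2^{-2j}$ gives $\frac12-\frac\pi4+\sum_{j=1}^{p/2}2^{-2j}\eta(2j)$, as claimed. For the odd branch, $z+1/z=i+1/i=0$, so the term $(z+1/z)\arctanh(z)$ vanishes, while $\ln(1-z^2)=\ln 2$; feeding these in along with $\Li_{2j+1}(-1)=-\eta(2j+1)$ yields $-\frac12+\frac{\ln 2}{2}+\sum_{j=1}^{(p-1)/2}2^{-(2j+1)}\eta(2j+1)$.

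The one genuine subtlety, and the step I expect to be the main obstacle, is that Theorem~\ref{thm_1} is stated only for $|z|<1$, whereas $z=i$ lies on the unit circle, so the substitution is really a boundary limit $z\to i$ that must be justified. I would argue this by continuity. The series $F(z,p)$ converges absolutely and uniformly on the closed unit disc for every $p\ge 0$, since the denominator grows like $k^{p+2}\ge k^2$ and $|z^{2k}|\le 1$, so the Weierstrass $M$-test applies and the sum is continuous up to the boundary. On the right-hand side, each $\Li_n$ that occurs has order $n\ge 2$ and is therefore continuous at the point $-1$, while $\arctanh$ is analytic in a neighbourhood of $i$, which lies off its real-axis branch cuts $(-\infty,-1]\cup[1,\infty)$; the products $(z\mp 1/z)\arctanh(z)$ and $\ln(1-z^2)$ are likewise continuous there. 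Hence both sides of \eqref{main_1} extend continuously to $z=i$, the identity persists in the limit $z\to i$, and the computation of the previous paragraph completes the proof.
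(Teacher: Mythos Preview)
Your proof is correct and follows essentially the same approach as the paper: both evaluate Theorem~\ref{thm_1} at $z=i$ via the limit $z\to i$, using $\Li_n(-1)=-\eta(n)$ together with $(i-1/i)\arctanh(i)=-\pi/2$ and $(i+1/i)\arctanh(i)+\ln(1-i^2)=\ln 2$. If anything, your argument is more complete, since you explicitly justify the passage to the boundary point by uniform convergence of $F(z,p)$ on the closed disc and continuity of the right-hand side near $z=i$, whereas the paper simply asserts the two limits.
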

\begin{proof}
Take the limit $z\rightarrow i$ and make use of the results $\Li_n(-1) = -\eta(n)$ \cite{Lewin} as well as 
\begin{equation*}
\lim_{z\rightarrow i} \left (z - \frac{1}{z}\right ) \arctanh (z) = - \frac{\pi}{2} \quad \mbox{and}\quad 
\lim_{z\rightarrow i} \left ( \left (z + \frac{1}{z}\right ) \arctanh (z) + \ln(1-z^2) \right ) = \ln(2).
\end{equation*}
\end{proof}
\begin{remark}
The relation
\begin{equation}\label{eta_zeta}
\eta(s) = (1-2^{1-s})\zeta(s)
\end{equation}
gives
\begin{eqnarray}\label{ex3}
F(i,p) & = & \sum_{k=1}^\infty \frac{(-1)^k}{(2k-1)(2k)^{p} (2k+1)} \nonumber \\
& = & \begin{cases}
	\frac{1}{2} - \frac{\pi}{4} + \sum_{j=1}^{p/2} (2^{-2j} - 2^{-(4j-1)}) \zeta (2j), & \text{\rm $p$ even;} \\[16pt]
	- \frac{1}{2} + \frac{\ln(2)}{2} + \sum_{j=1}^{(p-1)/2} (2^{-(2j+1)} - 2^{-(4j+1)}) \zeta (2j+1), & \text{\rm $p$ odd.} 
\end{cases} 
\end{eqnarray}
\end{remark}

Some particular cases of the previous results are stated below:

\begin{equation*}
\sum_{k=1}^\infty \frac{1}{(2k-1)(2k)(2k+1)} = \ln(2) - \frac{1}{2},
\end{equation*}
\begin{equation*}
\sum_{k=1}^\infty \frac{(-1)^k}{(2k-1)(2k)(2k+1)} = \frac{1}{2}(\ln(2) - 1),
\end{equation*}
\begin{equation*}
\sum_{k=1}^\infty \frac{1}{(2k-1)(2k)^2(2k+1)} = \frac{1}{2} - \frac{\pi^2}{24},
\end{equation*}
\begin{equation*}
\sum_{k=1}^\infty \frac{(-1)^k}{(2k-1)(2k)^2(2k+1)} = \frac{1}{2} - \frac{\pi}{4} + \frac{\pi^2}{48},
\end{equation*}
\begin{equation*}
\sum_{k=1}^\infty \frac{1}{(2k-1)(2k)^3(2k+1)} = \ln(2) - \frac{1}{2} - \frac{1}{8}\zeta(3),
\end{equation*}
\begin{equation*}
\sum_{k=1}^\infty \frac{(-1)^k}{(2k-1)(2k)^3(2k+1)} = \frac{1}{2}(\ln(2) - 1) + \frac{3}{32} \zeta(3).
\end{equation*}

The first identity is a rediscovery of Ramanujan's identity \eqref{Rama1} while the second recovers his first result in \eqref{Rama2}.

\begin{corollary}\label{cor3}
We have 
\begin{equation}
\sum_{k=1}^\infty \frac{1}{2^{k+1}(2k-1) k (2k+1)} = \frac{3}{2\sqrt{2}} \ln(1+\sqrt{2}) - \frac{1}{2}(\ln(2)+1),
\end{equation}
\begin{equation}
\sum_{k=1}^\infty \frac{1}{2^{k+2}(2k-1) k^2 (2k+1)} = \frac{1}{2} + \frac{1}{8} \ln^2 (2) - \frac{\pi^2}{48} 
- \frac{1}{2\sqrt{2}}\ln(1+\sqrt{2}),
\end{equation}
\begin{equation}
\sum_{k=1}^\infty \frac{1}{2^{k+3}(2k-1) k^3 (2k+1)} = \frac{3}{2\sqrt{2}} \ln(1+\sqrt{2}) + \frac{\pi^2 \ln(2)}{96}
- \frac{1}{2}(\ln(2)+1) - \frac{\ln^3 (2)}{48} - \frac{7}{64}\zeta(3).
\end{equation}
\end{corollary}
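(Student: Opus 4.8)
The plan is to recognize each of the three sums as a single special value of the series $F(z,p)$ from Theorem~\ref{thm_1}, evaluated at $z=1/\sqrt2$. Writing $1/k^p = 2^p/(2k)^p$ turns the factor $2^{-(k+p)}k^{-p}$ into $2^{-k}(2k)^{-p}$, and since $2^{-k} = (1/\sqrt2)^{2k}$, each sum becomes
\[
\sum_{k=1}^\infty \frac{1}{2^{k+p}(2k-1)k^p(2k+1)} = \sum_{k=1}^\infty \frac{(1/\sqrt2)^{2k}}{(2k-1)(2k)^p(2k+1)} = F\!\left(\tfrac{1}{\sqrt2},p\right).
\]
Since $|1/\sqrt2|<1$, Theorem~\ref{thm_1}, equivalently the explicit forms \eqref{eq.ajavxbw}, \eqref{eq.ekwvw5d}, \eqref{eq.uu2c0rv}, applies, and the three identities correspond to $p=1,2,3$, respectively. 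So the task reduces to substituting $z=1/\sqrt2$ and simplifying.

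The substitution requires three constant evaluations. First, from the logarithmic form of $\arctanh$, one has $\frac{1+1/\sqrt2}{1-1/\sqrt2} = \frac{\sqrt2+1}{\sqrt2-1} = (\sqrt2+1)^2$, whence $\arctanh(1/\sqrt2) = \ln(1+\sqrt2)$. Second, since $z^2 = 1/2$, one needs the classical dilogarithm and trilogarithm values
\[
\Li_2\!\left(\tfrac12\right) = \frac{\pi^2}{12} - \frac{\ln^2 2}{2}, \qquad \Li_3\!\left(\tfrac12\right) = \frac{7}{8}\zeta(3) - \frac{\pi^2}{12}\ln 2 + \frac{\ln^3 2}{6}.
\]
With these in hand, the cases $p=1$ and $p=2$ are immediate: in \eqref{eq.ajavxbw} one has $\ln(1-z^2) = -\ln 2$ and $z + 1/z = 3/\sqrt2$, while in \eqref{eq.ekwvw5d} the factor $(1-z^2)/z^2 = 1$ collapses, leaving only $\Li_2(1/2)$, the single $\arctanh$ term, and the constant.

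The one genuinely fiddly step is the case $p=3$ via \eqref{eq.uu2c0rv}, where the two logarithmic terms must be reorganized. Writing $\ln(1\pm 1/\sqrt2) = \pm\ln(1+\sqrt2) - \tfrac12\ln 2$ (using $\sqrt2-1 = (\sqrt2+1)^{-1}$) together with the coefficients $(1\pm z)^2/z = 3/\sqrt2 \pm 2$, I expect the $\ln(1+\sqrt2)$ contributions to add to $\tfrac{3}{2\sqrt2}\ln(1+\sqrt2)$ and the $\ln 2$ contributions to collapse to $-\tfrac12\ln 2$; combined with $-\tfrac18\Li_3(1/2)$ and the constant $-\tfrac12$ this should yield the stated closed form. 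The main obstacle is thus purely the bookkeeping of these logarithmic coefficients, and no analytic input beyond the standard polylogarithm values is needed.
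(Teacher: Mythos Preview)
Your proposal is correct and follows exactly the paper's own approach: set $z=1/\sqrt{2}$ in Theorem~\ref{thm_1} (equivalently in \eqref{eq.ajavxbw}--\eqref{eq.uu2c0rv}), invoke the known closed forms for $\Li_2(1/2)$ and $\Li_3(1/2)$, and simplify. Your extra bookkeeping for the $p=3$ logarithmic terms is accurate and simply makes explicit what the paper's ``and simplify'' leaves implicit.
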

\begin{proof}
Set $z=1/\sqrt{2}$ in \eqref{main_1}, use
\begin{equation*}
\Li_2\left (\frac{1}{2}\right ) = \frac{1}{2}\left ( \zeta (2) - \ln^2 (2)\right )
\end{equation*}
and
\begin{equation*}
\Li_3\left (\frac{1}{2}\right ) = \frac{7}{8}\zeta(3) - \frac{1}{2} \ln(2) \zeta(2) + \frac{1}{6} \ln^3 (2),
\end{equation*}
and simplify.
\end{proof}

\begin{corollary}\label{cor.duqhfnh}
We have
\begin{gather}
\sum_{k = 1}^\infty \frac{1}{{5^k (2k - 1)(2k)(2k + 1)}} 
= \frac{1}{2}\ln \left( {\frac{4}{5}} \right) + \frac{3}{\sqrt{5}}\ln (\alpha) - \frac{1}{2}, \\
\sum_{k = 1}^\infty \frac{{4^k }}{{5^k (2k - 1)(2k)(2k + 1)}} =  - \frac{1}{2}\ln (5) + \frac{27}{4\sqrt{5}} \ln (\alpha) - \frac{1}{2}, \\
\sum_{k = 1}^\infty \frac{{5^k }}{{9^k (2k - 1)(2k)(2k + 1)}} 
= \ln \left( {\frac{2}{3}} \right) + \frac{14}{3\sqrt{5}}\ln (\alpha) - \frac{1}{2} \label{eq.wjq5pc1}.
\end{gather}
\end{corollary}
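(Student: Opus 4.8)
The plan is to recognize each of the three series as a specialization of $F(z,1)$ and to apply the closed form \eqref{eq.ajavxbw}. Writing $5^{-k}=(1/\sqrt5)^{2k}$, $4^k/5^k=(2/\sqrt5)^{2k}$, and $5^k/9^k=(\sqrt5/3)^{2k}$, one sees that the three sums are exactly $F(1/\sqrt5,1)$, $F(2/\sqrt5,1)$, and $F(\sqrt5/3,1)$, each with $|z|<1$ so that \eqref{eq.ajavxbw} applies.

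First I would substitute each value of $z$ into
\[
F(z,1)=\tfrac12\ln(1-z^2)+\tfrac12\Big(z+\tfrac1z\Big)\arctanh(z)-\tfrac12 .
\]
The logarithmic term is immediate: $1-z^2$ equals $4/5$, $1/5$, and $4/9$ respectively, giving $\tfrac12\ln(4/5)$, $-\tfrac12\ln5$, and $\ln(2/3)$. The rational prefactor $z+1/z$ is likewise a routine computation, yielding $6/\sqrt5$, $9/(2\sqrt5)$, and $14/(3\sqrt5)$.

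The decisive step is to simplify the three values of $\arctanh(z)$ and to show that each is an integer multiple of $\ln\alpha$, where $\alpha=(1+\sqrt5)/2$ denotes the golden ratio. Using the logarithmic representation $\arctanh(z)=\tfrac12\ln\frac{1+z}{1-z}$ and rationalizing the argument, I would verify that $\frac{1+z}{1-z}$ becomes a perfect power of $\alpha$ in each case: for $z=1/\sqrt5$ it equals $\big((\sqrt5+1)/2\big)^2=\alpha^2$, for $z=2/\sqrt5$ it equals $(\sqrt5+2)^2=\alpha^6$, and for $z=\sqrt5/3$ it equals $\big((3+\sqrt5)/2\big)^2=\alpha^4$. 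Hence $\arctanh(z)$ equals $\ln\alpha$, $3\ln\alpha$, and $2\ln\alpha$ respectively. The only facts needed here are the elementary power relations $\alpha^2=(3+\sqrt5)/2$ and $\alpha^3=2+\sqrt5$, both consequences of $\alpha^2=\alpha+1$.

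Combining the pieces, the $\arctanh$ contributions become $\tfrac12\cdot\frac{6}{\sqrt5}\ln\alpha=\frac{3}{\sqrt5}\ln\alpha$, $\tfrac12\cdot\frac{9}{2\sqrt5}\cdot3\ln\alpha=\frac{27}{4\sqrt5}\ln\alpha$, and $\tfrac12\cdot\frac{14}{3\sqrt5}\cdot2\ln\alpha=\frac{14}{3\sqrt5}\ln\alpha$, which together with the logarithmic terms and the constant $-\tfrac12$ reproduce the three claimed identities. The only genuine obstacle is the recognition that the rationalized $\arctanh$ arguments are perfect powers of $\alpha$; once the perfect-square structure is spotted, everything else is bookkeeping.
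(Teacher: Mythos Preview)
Your proposal is correct and follows exactly the approach of the paper, which simply says to evaluate~\eqref{eq.ajavxbw} at $z=1/\sqrt5$, $z=2/\sqrt5$, and $z=\sqrt5/3$. You have in fact supplied the details the paper omits, namely the identification of each $\arctanh$ value as an integer multiple of $\ln\alpha$ via $\alpha^2=(3+\sqrt5)/2$ and $\alpha^3=2+\sqrt5$.
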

\begin{proof}
Evaluate~\eqref{eq.ajavxbw} at $z=1/\sqrt 5$, $z=2/\sqrt 5$ and $z=\sqrt 5/3$.
\end{proof}

\begin{corollary}\label{cor.j78scl1}
If $p$ is a non-negative integer and $0<x<2\pi$, then
\begin{equation}\label{eq.uygdx2h}
\begin{split}
&\sum_{k = 1}^\infty  {\frac{{\cos (kx)}}{{(2k - 1)(2k)^p (2k + 1)}}}\\
&\qquad =  \begin{cases}
 \frac{1}{2} - \frac{\pi }{4}\sin \left( {\frac{x}{2}} \right) - \sum\limits_{j = 1}^{p/2} {\frac{{\Gl_{2j} (x)}}{{2^{2j} }}},\quad\mbox{$p$ even} ;  \\ 
  - \frac{1}{2} - \frac{1}{2}\cos \left( {\frac{x}{2}} \right)\ln \left( {\tan \left( {\frac{x}{4}} \right)} \right) + \frac{1}{2}\ln \left( {2\sin \left( {\frac{x}{2}} \right)} \right) - \sum\limits_{j = 1}^{(p - 1)/2} {\frac{{\Cl_{2j + 1} (x)}}{{2^{2j + 1} }}},\quad\mbox{$p$ odd};  \\ 
 \end{cases} 
\end{split}
\end{equation}
and
\begin{equation}
\begin{split}
&\sum_{k = 1}^\infty  {\frac{{\sin (kx)}}{{(2k - 1)(2k)^p (2k + 1)}}} \\
&\qquad =  \begin{cases}
  - \frac{1}{2}\sin \left( {\frac{x}{2}} \right)\ln \left( {\tan \left( {\frac{x}{4}} \right)} \right) - \sum\limits_{j = 1}^{p/2} {\frac{{\Cl_{2j} (x)}}{{2^{2j} }}},\quad\mbox{$p$ even} ;  \\ 
 \frac{\pi }{4}\cos \left( {\frac{x}{2}} \right) - \frac{1}{4}(\pi  - x) - \sum\limits_{j = 1}^{(p - 1)/2} {\frac{{\Gl_{2j + 1} (x)}}{{2^{2j + 1} }}},\quad\mbox{$p$ odd} ;  \\ 
 \end{cases} 
\end{split}
\end{equation}
where $\Cl_n(y)$ and $\Gl_n(y)$ are the Clausen functions~\cite{Lewin} defined through
\begin{gather*}
\Li_{2n} (e^{ix} ) = \Gl_{2n} (x) + i\Cl_{2n} (x),\\
\Li_{2n + 1} (e^{ix} ) = \Cl_{2n + 1} (x) + i\Gl_{2n + 1} (x);
\end{gather*}
so that
\begin{gather*}
\Cl_{2n} (x) = \sum_{k = 1}^\infty  {\frac{{\sin (kx)}}{{k^{2n} }}} ,\quad\Cl_{2n + 1} (x) = \sum_{k = 1}^\infty  {\frac{{\cos (kx)}}{{k^{2n + 1} }}}, \\
\Gl_{2n} (x) = \sum_{k = 1}^\infty  {\frac{{\cos (kx)}}{{k^{2n} }}} ,\quad\Gl_{2n + 1} (x) = \sum_{k = 1}^\infty  {\frac{{\sin (kx)}}{{k^{2n + 1} }}}. 
\end{gather*}
\end{corollary}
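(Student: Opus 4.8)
The plan is to specialize Theorem~\ref{thm_1} to the boundary point $z=e^{ix/2}$, where $z^{2k}=e^{ikx}=\cos(kx)+i\sin(kx)$, so that the real and imaginary parts of $F(e^{ix/2},p)$ reproduce exactly the cosine and sine series in question. Since the summand of $F(z,p)$ is dominated by $1/\big((2k-1)(2k+1)\big)$ uniformly on $|z|\le 1$, the Weierstrass $M$-test shows that $F(z,p)$ is continuous on the closed unit disk; and for $0<x<2\pi$ the point $e^{ix/2}$ avoids $\pm1$ while $e^{ix}$ avoids $1$, so every function on the right-hand side of \eqref{main_1} extends continuously there. The identity \eqref{main_1}, a priori valid for $|z|<1$, therefore passes to $z=e^{ix/2}$ by continuity, and it remains only to evaluate the building blocks and separate real and imaginary parts.

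First I would record the boundary values. From the logarithmic representation one finds $\frac{1+e^{ix/2}}{1-e^{ix/2}}=i\cot(x/4)$, whence
\[
\arctanh\big(e^{ix/2}\big)=-\tfrac12\ln\tan\big(\tfrac{x}{4}\big)+i\tfrac{\pi}{4};
\]
likewise $1-e^{ix}=2\sin(\tfrac{x}{2})\,e^{i(x/2-\pi/2)}$ gives
\[
\ln\big(1-e^{ix}\big)=\ln\big(2\sin\tfrac{x}{2}\big)+i\big(\tfrac{x}{2}-\tfrac{\pi}{2}\big).
\]
Together with $z+\tfrac1z=2\cos(\tfrac{x}{2})$, $z-\tfrac1z=2i\sin(\tfrac{x}{2})$, and the defining decompositions $\Li_{2j}(e^{ix})=\Gl_{2j}(x)+i\Cl_{2j}(x)$ and $\Li_{2j+1}(e^{ix})=\Cl_{2j+1}(x)+i\Gl_{2j+1}(x)$, these are all the ingredients needed.

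Next I would substitute these into the two branches of \eqref{main_1}. For even $p$ the product $(z-\tfrac1z)\arctanh(z)$ becomes $-\tfrac{\pi}{2}\sin(\tfrac{x}{2})-i\sin(\tfrac{x}{2})\ln\tan(\tfrac{x}{4})$; collecting the real part of $\tfrac12\big((z-\tfrac1z)\arctanh(z)+1-\sum 2^{-(2j-1)}\Li_{2j}(z^2)\big)$ yields the stated cosine series (with $\Gl_{2j}$), and the imaginary part yields the stated sine series (with $\Cl_{2j}$), using $\tfrac12\cdot 2^{-(2j-1)}=2^{-2j}$. For odd $p$ the product $(z+\tfrac1z)\arctanh(z)$ becomes $-\cos(\tfrac{x}{2})\ln\tan(\tfrac{x}{4})+i\tfrac{\pi}{2}\cos(\tfrac{x}{2})$; combining with the real and imaginary parts of $\ln(1-e^{ix})$ and of $\sum 2^{-2j}\Li_{2j+1}(z^2)$ gives the cosine series (with $\Cl_{2j+1}$) from the real part and the sine series (with $\Gl_{2j+1}$) from the imaginary part, the constant in the latter being $\tfrac{\pi}{4}\cos(\tfrac{x}{2})+\tfrac{x}{4}-\tfrac{\pi}{4}=\tfrac{\pi}{4}\cos(\tfrac{x}{2})-\tfrac14(\pi-x)$.

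The only genuine subtlety, and the step I expect to require the most care, is the branch bookkeeping in the passage to the boundary: one must verify that the principal-branch values of $\arctanh$ and $\ln(1-z^2)$ recorded above are indeed the correct continuous limits from inside the disk. This can be checked independently from the classical Fourier expansions on $(0,2\pi)$: the imaginary part of $\arctanh(e^{ix/2})$ is the square-wave sum $\sum_{k\ge1}\sin((2k-1)x/2)/(2k-1)=\pi/4$ and its real part is $\sum_{k\ge1}\cos((2k-1)x/2)/(2k-1)=\tfrac12\ln\cot(x/4)$, while $\ln(1-e^{ix})=-\sum_{k\ge1}e^{ikx}/k$ has real part $-\sum\cos(kx)/k=\ln(2\sin\tfrac{x}{2})$ and imaginary part $-\sum\sin(kx)/k=\tfrac{x}{2}-\tfrac{\pi}{2}$. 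These agree with the closed forms above and legitimise the evaluation.
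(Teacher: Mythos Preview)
Your proposal is correct and follows exactly the paper's approach: set $z=e^{ix/2}$ in Theorem~\ref{thm_1} and separate real and imaginary parts. In fact you supply considerably more than the paper does, since the paper's one-line proof does not address the boundary passage from $|z|<1$ to $|z|=1$ or the branch issues for $\arctanh$ and $\ln(1-z^2)$ that you handle explicitly via the $M$-test and the classical Fourier expansions.
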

\begin{proof}
Set $z=\exp(ix/2)$ in~Theorem~\ref{thm_1} and take real and imaginary parts.
\end{proof}

\begin{corollary}\label{cor.nhgff23}
If $p$ is a non-negative integer, then
\begin{gather}
\sum_{k = 1}^\infty  {\frac{{\cos (kx)}}{{(2k - 1)(2k)^{2p} (2k + 1)}}}  = \frac{1}{2} - \frac{\pi }{4}\sin \left( {\frac{x}{2}} \right) - \frac{1}{2}\sum_{j = 1}^p {\frac{{( - 1)^{j + 1} \pi ^{2j} }}{{(2j)!}}B_{2j} \left( {\frac{x}{{2\pi }}} \right)}, \\
\sum_{k = 1}^\infty  {\frac{{\sin (kx)}}{{(2k - 1)(2k)^{2p + 1} (2k + 1)}}}  = \frac{\pi }{4}\cos \left( {\frac{x}{2}} \right) - \frac{1}{4}(\pi  - x) - \frac{1}{2}\sum_{j = 1}^p {\frac{{( - 1)^{j + 1} \pi ^{2j + 1} }}{{(2j + 1)!}}B_{2j + 1} \left( {\frac{x}{{2\pi }}} \right)}\label{eq.pk7z8s2} ,
\end{gather}
where $B_n(y)$ are the Bernoulli polynomials defined by
\[
\frac{{te^{xt} }}{{e^t  - 1}} = \sum_{k = 0}^\infty  {\frac{{B_k (x)t^k }}{{k!}}}.
\]
\end{corollary}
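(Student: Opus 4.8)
The plan is to read off the two stated identities directly from Corollary~\ref{cor.j78scl1}, by first specialising the order $p$ and then rewriting the Clausen functions $\Gl_{2j}$ and $\Gl_{2j+1}$ that appear in terms of Bernoulli polynomials. For the cosine sum I would put $p=2p$ (even) in the first branch of~\eqref{eq.uygdx2h}, giving
\begin{equation*}
\sum_{k=1}^\infty \frac{\cos(kx)}{(2k-1)(2k)^{2p}(2k+1)} = \frac12 - \frac{\pi}{4}\sin\Big(\frac x2\Big) - \sum_{j=1}^{p}\frac{\Gl_{2j}(x)}{2^{2j}},
\end{equation*}
and for the sine sum I would put $p=2p+1$ (odd) in the second branch of the sine series of Corollary~\ref{cor.j78scl1}, obtaining the analogous expression with summand $\Gl_{2j+1}(x)/2^{2j+1}$. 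Thus everything reduces to expressing $\Gl_{2j}$ and $\Gl_{2j+1}$ through the $B_{2j}$ and $B_{2j+1}$.

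Next I would invoke the classical Fourier expansions of the Bernoulli polynomials, valid for $0\le x\le 2\pi$ (the endpoints being excluded in the odd-index case):
\begin{equation*}
\Gl_{2j}(x) = \sum_{k=1}^\infty \frac{\cos(kx)}{k^{2j}} = \frac{(-1)^{j+1}(2\pi)^{2j}}{2\,(2j)!}\,B_{2j}\Big(\frac{x}{2\pi}\Big),
\end{equation*}
\begin{equation*}
\Gl_{2j+1}(x) = \sum_{k=1}^\infty \frac{\sin(kx)}{k^{2j+1}} = \frac{(-1)^{j+1}(2\pi)^{2j+1}}{2\,(2j+1)!}\,B_{2j+1}\Big(\frac{x}{2\pi}\Big).
\end{equation*}
These follow from the identity $\sum_{k\neq 0} e^{2\pi i k y}/k^n = -(2\pi i)^n B_n(y)/n!$ by taking real and imaginary parts with $y=x/(2\pi)$; for even $n$ the nonzero frequencies pair up into cosines and for odd $n$ into sines, and the constraint $0\le y\le 1$ becomes $0\le x\le 2\pi$.

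Finally I would substitute these expansions into the two specialised formulas and check that the numerical factors coalesce. In the even case one has $2^{-2j}\cdot\tfrac{(-1)^{j+1}(2\pi)^{2j}}{2(2j)!} = \tfrac12\tfrac{(-1)^{j+1}\pi^{2j}}{(2j)!}$, so each term $\Gl_{2j}(x)/2^{2j}$ collapses to $\tfrac12\tfrac{(-1)^{j+1}\pi^{2j}}{(2j)!}B_{2j}(x/(2\pi))$; likewise in the odd case $2^{-(2j+1)}\cdot\tfrac{(-1)^{j+1}(2\pi)^{2j+1}}{2(2j+1)!} = \tfrac12\tfrac{(-1)^{j+1}\pi^{2j+1}}{(2j+1)!}$. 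Summing over $j$ from $1$ to $p$ then reproduces exactly the two claimed right-hand sides. I do not expect a genuine obstacle here: the work is purely the bookkeeping of the powers of two and the alternating signs, plus the observation that the sine identity should be read on the open interval $0<x<2\pi$ where the odd Bernoulli Fourier series is valid, with all the series converging absolutely once $p\ge 1$.
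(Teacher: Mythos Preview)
Your proposal is correct and follows essentially the same route as the paper: specialise the parity in Corollary~\ref{cor.j78scl1} and then replace the Clausen functions $\Gl_n$ by Bernoulli polynomials via the standard Fourier expansion, which is exactly the identity $\Gl_n(x)=(-1)^{1+\lfloor n/2\rfloor}2^{n-1}\pi^n B_n(x/2\pi)/n!$ the paper quotes from Lewin. Your explicit check of the powers of two and signs is accurate, and your remark on the open interval is harmless (for $j\ge 1$ the Bernoulli Fourier identities in fact extend to the closed interval, but Corollary~\ref{cor.j78scl1} is already stated for $0<x<2\pi$).
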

\begin{proof}
Use the identity~\cite{Lewin}
\[
\Gl_n (x) = ( - 1)^{1 + \left\lfloor {n/2} \right\rfloor } 2^{n - 1} \pi ^n \frac{{B_n (x/2\pi )}}{{n!}}
\]
in Corollary~\ref{cor.j78scl1}.
\end{proof}

\begin{theorem}
If $p$ is a non-negative integer, then
\begin{equation}\label{eq.jt6n9me}
\begin{split}
\sum_{k = 1}^\infty \frac{(- 1)^{k - 1}}{(4k - 1)(4k)^{2p + 1} (4k + 1)}  &= \frac{1}{2} + \frac{{\sqrt 2 }}{4}\ln (\sqrt 2  - 1) 
- \frac{1}{4}\ln (2) - \frac{1}{4}\sum_{j = 1}^p \frac{{(1 - 2^{- 2j} )}}{{2^{4j} }}\zeta (2j + 1),
\end{split}
\end{equation}
\begin{equation}\label{eq.fb5dwvj}
\sum_{k = 1}^\infty \frac{(- 1)^{k - 1} }{(4k - 3)(4k - 2)^{2p + 1} (4k - 1)} = \frac{\pi }{8}(\sqrt 2  - 1) 
- \frac{1}{2}\sum_{j = 1}^p \frac{{( - 1)^{j + 1} \pi ^{2j + 1} }}{{(2j + 1)!}}B_{2j + 1} (1/4).
\end{equation}
\end{theorem}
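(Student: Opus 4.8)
The plan is to obtain both identities by specializing the trigonometric series of Corollaries~\ref{cor.j78scl1} and~\ref{cor.nhgff23} at the single point $x=\pi/2$. The guiding observation is that evaluating $\cos(mx)$ and $\sin(mx)$ at $x=\pi/2$ acts as a parity filter on the summation index $m$ of the master series $\sum_m 1/[(2m-1)(2m)^q(2m+1)]$: since $\cos(m\pi/2)=0$ for odd $m$ and $\cos(2k\cdot\pi/2)=(-1)^k$, the cosine series isolates the even subsequence $m=2k$, producing the denominators $(4k-1)(4k)^q(4k+1)$; dually, $\sin(m\pi/2)=0$ for even $m$ and $\sin((2k-1)\pi/2)=(-1)^{k-1}$, so the sine series isolates the odd subsequence $m=2k-1$, producing $(4k-3)(4k-2)^q(4k-1)$. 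In both cases the surviving sign is exactly the alternating factor $(-1)^{k-1}$ demanded by the theorem.

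For~\eqref{eq.jt6n9me} I would take the cosine formula in the odd-exponent case of Corollary~\ref{cor.j78scl1}, i.e.\ with the exponent equal to $2p+1$ (so that its upper limit $(p_{\mathrm{exp}}-1)/2$ becomes $p$), and set $x=\pi/2$. The parity filter turns the left-hand side into $-\sum_k (-1)^{k-1}/[(4k-1)(4k)^{2p+1}(4k+1)]$, while the elementary pieces evaluate via $\cos(\pi/4)=\tfrac{\sqrt2}{2}$, $\tan(\pi/8)=\sqrt2-1$ and $2\sin(\pi/4)=\sqrt2$. It then remains to evaluate the Clausen terms: from its series $\Cl_{2j+1}(\pi/2)=\sum_k \cos(k\pi/2)/k^{2j+1}$ only even $k$ survive, giving $-2^{-(2j+1)}\eta(2j+1)$, and the reflection~\eqref{eta_zeta} converts this to $-2^{-(2j+1)}(1-2^{-2j})\zeta(2j+1)$. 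Collecting the terms and transferring the overall minus sign to the right produces the claimed closed form, with the coefficient $2^{-(4j+2)}=\tfrac14\,2^{-4j}$ matching exactly.

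Identity~\eqref{eq.fb5dwvj} is even more direct: it is simply equation~\eqref{eq.pk7z8s2} of Corollary~\ref{cor.nhgff23}---the sine series already written in terms of the Bernoulli polynomials---evaluated at $x=\pi/2$. Here the sine parity filter yields $\sum_k (-1)^{k-1}/[(4k-3)(4k-2)^{2p+1}(4k-1)]$ on the left, the elementary part $\tfrac{\pi}{4}\cos(\pi/4)-\tfrac14(\pi-\pi/2)$ collapses to $\tfrac{\pi}{8}(\sqrt2-1)$, and the argument of the Bernoulli polynomials is $x/2\pi=1/4$, giving the stated $B_{2j+1}(1/4)$ terms verbatim.

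The analytic content is entirely inherited from the corollaries, so no genuinely new estimation or integration is required. The only point demanding care---and the step I expect to be the main source of error---is the index and exponent bookkeeping: one must check that the re-indexings $m=2k$ and $m=2k-1$ reproduce the denominators and the sign $(-1)^{k-1}$ correctly, that the odd exponent $2p+1$ sends the corollaries' upper limit to $p$, and that the powers of $2$ collapse as claimed when passing from $\Cl_{2j+1}$ and $\Gl_{2j+1}$ to $\zeta$ and $B_{2j+1}$.
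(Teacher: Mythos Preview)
Your proposal is correct and follows essentially the same route as the paper's own proof: specialize the odd-exponent cosine case of Corollary~\ref{cor.j78scl1} and the sine identity~\eqref{eq.pk7z8s2} of Corollary~\ref{cor.nhgff23} at $x=\pi/2$, using the parity of $\cos(k\pi/2)$ and $\sin(k\pi/2)$ to select the even and odd subsequences, respectively. The only cosmetic difference is that the paper quotes $\Cl_{2j+1}(\pi/2)=-2^{-(2j+1)}(1-2^{-2j})\zeta(2j+1)$ from Lewin, whereas you rederive it from the series and~\eqref{eta_zeta}; the bookkeeping you flag is handled correctly.
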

\begin{proof}
Write $2p + 1$ for $p$ in~\eqref{eq.uygdx2h}, set $x=\pi/2$ and use the identity~\cite{Lewin}
\[
\Cl_{2n + 1} (\pi /2) = - 2^{-(2n + 1)} (1 - 2^{-2n}) \zeta (2n + 1)
\]
to obtain
\begin{equation*}
\sum_{k = 1}^\infty \frac{\cos (k\pi /2)}{(2k - 1)(2k)^{2p + 1} (2k + 1)} = - \frac{1}{2} - \frac{{\sqrt 2 }}{4}\ln (\sqrt 2  - 1) 
+ \frac{1}{4}\ln (2) + \frac{1}{4}\sum_{j = 1}^p \frac{{(1 - 2^{ - 2j} )}}{{2^{4j} }}\zeta (2j + 1),
\end{equation*}
from which~\eqref{eq.jt6n9me} follows. Identity~\eqref{eq.fb5dwvj} is proved by setting $x=\pi/2$ in~\eqref{eq.pk7z8s2}.
\end{proof}

\begin{theorem}
If $p$ is a non-negative integer, then
\begin{equation}\label{eq_last1}
\begin{split}
&\sum_{k = 1}^\infty \frac{1}{(6k - 1)(6k)^{2p + 1} (6k + 1)} 
- \frac{1}{2} \Big ( \sum_{k = 1}^\infty \frac{1}{(6k + 1)(6k+2)^{2p + 1} (6k + 3)} \\
& \qquad + \sum_{k = 1}^\infty \frac{1}{(6k + 3)(6k+4)^{2p + 1} (6k + 5)}\Big )  \\ 
& \qquad = \frac{1}{3}\frac{1}{ 2^{2p+2}} + \frac{1}{15}\frac{1}{ 2^{4p+3}} - \frac{1}{2} + \frac{3}{8}\ln (3) 
+ \frac{1}{4}\sum_{j = 1}^p \frac{(1-3^{-2j})}{2^{2j}}\zeta (2j + 1).
\end{split}
\end{equation}
and
\begin{equation}\label{eq_last2}
\begin{split}
&\sum_{k = 0}^\infty \frac{1}{(6k + 1)(6k+2)^{2p + 1} (6k + 3)} - \sum_{k = 0}^\infty \frac{1}{(6k + 3)(6k+4)^{2p + 1} (6k + 5)} \\ 
& \qquad = \frac{\sqrt{3} \pi}{36} - \frac{\sqrt{3}}{3} \sum_{j = 1}^p \frac{( - 1)^{j + 1} \pi^{2j + 1}}{(2j + 1)!} B_{2j + 1} (1/3).
\end{split}
\end{equation}
\end{theorem}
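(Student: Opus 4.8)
The plan is to specialize the trigonometric series of Corollary~\ref{cor.j78scl1} at the single angle $x=2\pi/3$, the value that sorts the summation index by its residue modulo $3$ (equivalently, the modulo-$6$ pattern visible in the denominators). Starting from the base series $\sum_{m\geq1}\frac{1}{(2m-1)(2m)^{2p+1}(2m+1)}$ and writing
\[
S_r=\sum_{\substack{m\geq1\\ m\equiv r\,(3)}}\frac{1}{(2m-1)(2m)^{2p+1}(2m+1)},\qquad r=0,1,2,
\]
the weights $\cos(2\pi m/3)\in\{1,-\tfrac12,-\tfrac12\}$ and $\sin(2\pi m/3)\in\{0,\tfrac{\sqrt3}{2},-\tfrac{\sqrt3}{2}\}$ on the three residue classes turn the cosine series into $S_0-\tfrac12(S_1+S_2)$ and the sine series into $\tfrac{\sqrt3}{2}(S_1-S_2)$. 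After re-indexing via $m=3k+r$, these are exactly the two linear combinations appearing on the left of \eqref{eq_last1} and \eqref{eq_last2}.

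For \eqref{eq_last1} I would use the odd branch of \eqref{eq.uygdx2h} with $p$ replaced by $2p+1$ (so the Clausen sum runs $j=1,\dots,p$). At $x=2\pi/3$ one has $\cos(x/2)=\tfrac12$, $\tan(x/4)=1/\sqrt3$ and $2\sin(x/2)=\sqrt3$, so the elementary part collapses to $-\tfrac12+\tfrac38\ln 3$. For the Clausen terms I would compute $\Cl_{2j+1}(2\pi/3)$ from its defining series: splitting $\sum_k k^{-(2j+1)}\cos(2\pi k/3)$ by residue and using $\sum_{k\equiv0\,(3)}k^{-(2j+1)}=3^{-(2j+1)}\zeta(2j+1)$ yields
\[
\Cl_{2j+1}(2\pi/3)=-\tfrac12\bigl(1-3^{-2j}\bigr)\zeta(2j+1),
\]
which converts $-\sum_j 2^{-(2j+1)}\Cl_{2j+1}(2\pi/3)$ into the stated series $\tfrac14\sum_j 2^{-2j}(1-3^{-2j})\zeta(2j+1)$. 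For \eqref{eq_last2} the shortest route is to substitute $x=2\pi/3$ directly into the Bernoulli form \eqref{eq.pk7z8s2}: here $\cos(x/2)=\tfrac12$, $\pi-x=\pi/3$ and $x/2\pi=1/3$, so the left side is $\tfrac{\sqrt3}{2}(S_1-S_2)$ and the right side is $\tfrac{\pi}{24}$ minus the Bernoulli sum at $1/3$. Multiplying through by $2/\sqrt3$ and rationalizing $\tfrac{1}{\sqrt3}=\tfrac{\sqrt3}{3}$ reproduces \eqref{eq_last2} directly, with no further special values needed.

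The one delicate step is the index bookkeeping for \eqref{eq_last1}. All three of its subsums start at $k=1$, but $S_1$ and $S_2$ (through $m=3k+1$ and $m=3k+2$) naturally begin at $k=0$, i.e.\ at $m=1$ and $m=2$. Restoring those two missing terms, $\frac{1}{3\cdot2^{2p+1}}$ and $\frac{1}{15\cdot2^{4p+2}}$, each carried by the factor $\tfrac12$ multiplying $S_1+S_2$, is precisely what generates the explicit rational part $\tfrac13\,2^{-(2p+2)}+\tfrac{1}{15}\,2^{-(4p+3)}$ on the right of \eqref{eq_last1}; identity \eqref{eq_last2} requires no such correction since its subsums already run from $k=0$. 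I expect this shift accounting, together with the Clausen evaluation above, to be the only points needing care, the remainder being routine substitution.
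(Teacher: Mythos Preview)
Your proposal is correct and follows essentially the same route as the paper: set $x=2\pi/3$ in the odd branch of \eqref{eq.uygdx2h} (with $p\to 2p+1$) and in \eqref{eq.pk7z8s2}, split by residue modulo $3$ using the same cosine/sine weights, and invoke the same evaluation $\Cl_{2j+1}(2\pi/3)=-\tfrac12(1-3^{-2j})\zeta(2j+1)$. Your treatment is in fact more explicit than the paper's on two points the paper leaves to ``simplifying'': you derive the Clausen value from its defining series rather than citing it, and you spell out the $k=0$ index correction that produces the rational terms $\tfrac13\,2^{-(2p+2)}+\tfrac{1}{15}\,2^{-(4p+3)}$ in \eqref{eq_last1}.
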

\begin{proof}
Again, write $2p + 1$ for $p$ in \eqref{eq.uygdx2h}, set $x=2\pi/3$ and use the identity \cite{Lewin}
\[
\Cl_{2n + 1} (2 \pi /3) = - \frac{1}{2} (1 - 3^{-2n}) \zeta (2n + 1)
\]
to obtain
\begin{equation*}
\sum_{k = 1}^\infty \frac{\cos (2\pi k/3)}{(2k - 1)(2k)^{2p + 1} (2k + 1)} = - \frac{1}{2} + \frac{3}{8}\ln (3) 
+ \frac{1}{4}\sum_{j = 1}^p \frac{(1-3^{-2j})}{2^{2j}}\zeta (2j + 1),
\end{equation*}
from which \eqref{eq_last1} follows upon noting that
\begin{equation*}
\cos (2\pi k/3) = \begin{cases}
1, & \mbox{$k\equiv 0$ mod 3};  \\ 
-1/2, & \mbox{$k\equiv 1,2$ mod 3} ;  \\ 
\end{cases} 
\end{equation*}
and simplifying. Identity \eqref{eq_last2} is proved by setting $x=2\pi/3$ in \eqref{eq.pk7z8s2} in conjunction with
\begin{equation*}
\sin (2\pi k/3) = \begin{cases}
0, & \mbox{$k\equiv 0$ mod 3};  \\ 
\sqrt{3}/2, & \mbox{$k\equiv 1$ mod 3};  \\ 
-\sqrt{3}/2, & \mbox{$k\equiv 2$ mod 3}.  \\
\end{cases} 
\end{equation*}
\end{proof}

Additional interesting results can be obtained from Corollaries \ref{cor.j78scl1} and \ref{cor.nhgff23}, which we leave for a private study.

\section{Another approach to evaluate $F(1,p)$ and $F(i,p)$ and a restatement of Theorem 1} 

There is another direct approach to evaluate the series $F(1,p)$ and $F(i,p)$, respectively.
As $F(1,0)$ and $F(i,0)$ follow easily be telescoping we assume that $p\geq 1$. From the partial fraction decomposition
\begin{equation*}
\frac{1}{(2k-1) k^p (2k+1)} = - \frac{1}{k^p} + \frac{1}{k^{p-1}(2k+1)} + \frac{1}{k^{p-1}(2k-1)}
\end{equation*}
in conjunction with
\begin{equation*}
\frac{1}{k^p (2k+1)} = \frac{1}{k^p} - \frac{2}{k^{p-1}(2k+1)}, \qquad \frac{1}{k^p (2k-1)} = - \frac{1}{k^p} + \frac{2}{k^{p-1}(2k-1)},
\end{equation*}
we get for $p\geq 1$
\begin{equation}\label{pafrac}
\frac{1}{(2k-1) k^p (2k+1)} = - \frac{1}{k^p} + \sum_{j=0}^{p-2} \frac{2^j ((-1)^j - 1)}{k^{p-1-j}} 
+ 2^{p-1}\Big ( \frac{(-1)^{p-1}}{2k+1} + \frac{1}{2k-1} \Big ).
\end{equation}
This produces
\begin{equation*}
F(1,p) = - 2^{-p} \zeta(p) +  \sum_{j=0}^{p-2} \frac{(-1)^j - 1}{2^{p-j}} \zeta(p-1-j) 
+ \frac{1}{2} \sum_{k=1}^\infty \Big ( \frac{1}{2k-1} + \frac{(-1)^{p-1}}{2k+1} \Big ).
\end{equation*}
If $p$ is even, then by telescoping
\begin{equation*}
\sum_{k=1}^\infty \Big ( \frac{1}{2k-1} - \frac{1}{2k+1} \Big ) = 1
\end{equation*}
and
\begin{eqnarray*}
F(1,p) & = & \frac{1}{2} - 2^{-p} \zeta(p) - \sum_{j=1, \, j\, \mbox{\tiny odd}}^{p-2} \frac{1}{2^{p-(j+1)}} \zeta(p-(j+1)) \\
& = & \frac{1}{2} - 2^{-p} \zeta(p) - \sum_{j=2, \, j\, \mbox{\tiny even}}^{p-2} \frac{1}{2^{p-j}} \zeta(p-j) \\
& = & \frac{1}{2} - 2^{-p} \zeta(p) - \sum_{j=1}^{(p-2)/2} \frac{1}{2^{p-2j}} \zeta(p-2j). 
\end{eqnarray*}
This is the identity for $p$ even as stated in \eqref{ex1}. Similarly if $p$ is odd, then 
\begin{equation*}
\sum_{k=1}^\infty \Big ( \frac{1}{2k-1} + \frac{1}{2k+1} \Big ) = 1 + 2 \sum_{k=1}^\infty \frac{1}{2k+1}
\end{equation*}
and
\begin{equation*}
F(1,p) = - \frac{1}{2} - 2^{-p} \zeta(p) + \sum_{k=0}^\infty \frac{1}{2k+1} - \frac{1}{2}\zeta(1) 
+ \sum_{j=0}^{p-3} \frac{(-1)^j - 1}{2^{p-j}} \zeta(p-1-j). 
\end{equation*}
But
\begin{equation*}
\sum_{k=0}^\infty \frac{1}{2k+1} - \frac{1}{2}\zeta(1) = 1 - \frac{1}{2} \sum_{k=1}^\infty \frac{1}{k(2k+1)} 
= 1 - \frac{1}{2}( 2 - 2 \ln (2)) = \ln (2),
\end{equation*}
and hence
\begin{equation*}
F(1,p) = - \frac{1}{2} + \ln (2) - 2^{-p} \zeta(p) - \sum_{j=1}^{(p-3)/2} 2^{-(p-2j)} \zeta(p-2j). 
\end{equation*}
The last expression is equivalent to the second part in \eqref{ex1}. 
The derivation of $F(i,p)$ based on a partial fraction decomposition is done in the same way making use of the results
\begin{equation*}
\sum_{k=1}^\infty \frac{(-1)^k}{(2k-1)(2k+1)} = \frac{2-\pi}{4}
\end{equation*}
and
\begin{equation*}
\sum_{k=1}^\infty \frac{(-1)^k k}{(2k-1)(2k+1)} = - \frac{1}{4}.
\end{equation*}
Both results are proved easily. The first one, for instance, follows from
\begin{equation*}
\frac{1}{(2k-1)(2k+1)} = \frac{1}{2} \Big ( \frac{1}{2k-1} - \frac{1}{2k+1} \Big )
\end{equation*}
which gives after shifting the summation index
\begin{equation*}
\sum_{k=1}^\infty \frac{(-1)^k }{(2k-1)(2k+1)} = \frac{1}{2} - \sum_{k=1}^\infty \frac{(-1)^k}{2k+1}.
\end{equation*}
The other series is treated similarly. The remaining steps are omitted. \\

Surprisingly, the above analysis allows to prove Theorem \ref{thm_1} directly,
without requiring to distinguish between $p$ odd and $p$ even, respectively. 
Note that \eqref{pafrac} is equivalent to
\begin{equation*}
\frac{1}{(2k - 1)k^p (2k + 1)} =  - \frac 1{k^p} + \sum_{j = 1}^{\left\lceil {p/2} \right\rceil - 1}{\frac{{2^{2j} }}{k^{p - 2j} }}  
+ 2^{p - 1} \left( \frac{{(- 1)^{p - 1} }}{2k + 1} + \frac{1}{2k - 1} \right),
\end{equation*}
which, multiplying through by $z^k$, gives
\begin{equation*}
\frac{z^k}{(2k - 1)k^p (2k + 1)} = - \frac{z^k}{k^p} + \sum_{j = 0}^{\left\lceil {p/2} \right\rceil - 1} \frac{{2^{2j} z^k }}{k^{p - 2j}} 
 + 2^{p - 1} z^k \left( \frac{{(- 1)^{p - 1}}}{2k + 1} + \frac{1}{2k - 1} \right).
\end{equation*}
Hence we obtain
\begin{equation}\label{eq.ch3xzuw}
\sum_{k = 1}^\infty \frac{z^k}{(2k - 1)k^p (2k + 1)}  = - \sum_{k = 1}^\infty \frac{z^k}{k^p}  
+ \sum_{j = 0}^{\left\lceil {p/2} \right\rceil - 1} {2^{2j} \sum_{k = 1}^\infty \frac{z^k}{k^{p - 2j}}}  
+ 2^{p - 1} \sum_{k = 1}^\infty z^k \left( \frac{(- 1)^{p - 1}}{2k + 1} + \frac{1}{2k - 1} \right).
\end{equation}
To evaluate the last term in~\eqref{eq.ch3xzuw}, by \eqref{macl}, let
\begin{equation*}
f(z) = \sum_{k = 1}^\infty \frac{z^k}{2k - 1} = \sqrt{z} \arctanh (\sqrt{z}) = \frac{\sqrt z }{2} \ln \left( \frac{{1 + \sqrt z }}{{1 - \sqrt z }} \right), \quad|z|<1.
\end{equation*}
Then
\begin{align}
\sum_{k = 1}^\infty z^k \left( \frac{{( - 1)^{p - 1} }}{2k + 1} + \frac{1}{2k - 1} \right)
&= (- 1)^{p - 1} \sum_{k = 1}^\infty \frac{z^k}{2k + 1} + f(z) \nonumber \\
&= (- 1)^{p - 1} \sum_{k = 2}^\infty \frac{z^{k - 1}}{2k - 1} + f(z) \nonumber \\
&= \frac{(- 1)^{p - 1}}{z} \left( \sum_{k = 1}^\infty \frac{z^k}{2k - 1} - z \right) + f(z) \nonumber \\
&= \frac{(- 1)^{p - 1}}{z} f(z) + (- 1)^p + f(z) \nonumber.
\end{align}
Thus,
\begin{equation}\label{eq.kb4c73v}
\sum_{k = 1}^\infty z^k \left( \frac{{(- 1)^{p - 1} }}{2k + 1} + \frac{1}{2k - 1} \right)
= \left( 1 + \frac{{( - 1)^{p - 1} }}{2} \right ) \frac{\sqrt{z}}{2}\ln \left( \frac{1 + \sqrt{z}}{{1 - \sqrt{z} }} \right) + (- 1)^p.
\end{equation}
Using~\eqref{eq.kb4c73v} in~\eqref{eq.ch3xzuw} and invoking the definition of the polylogarithm to write the first 
two terms of~\eqref{eq.ch3xzuw} gives for $p$ a non-negative integer and $|z|<1$ the identity
\begin{equation}\label{eq.bm10lda}
\begin{split}
\sum_{k=1}^\infty \frac{z^k}{(2k - 1)k^p (2k + 1)} &= - \Li_p (z) - \sum_{j = 1}^{\left\lceil {p/2} \right\rceil - 1} 2^{2j}\Li_{p - 2j} (z) \\ 
&\qquad + 2^{p-2} \left(1 + \frac{(- 1)^{p - 1}}{z} \right)\sqrt{z}\,\ln \left( \frac{1 + \sqrt{z}}{1 - \sqrt{z}} \right) + 2^{p - 1} (- 1)^p.
\end{split}
\end{equation}
This is an equivalent form of Theorem \ref{thm_1}.

\section{New two-term dilogarithm identities and the evaluations of some related series}

It is obvious that from Theorem \ref{thm_1} for suitable choices of the parameter $z$ additional interesting series
can be evaluated in closed form. In this section, we focus on series involving the golden section $\alpha$ and Lucas numbers $L_n$. \\

Recall that the Fibonacci numbers $F_n$ and the Lucas numbers $L_n$ are defined, for \text{$n\in\mathbb Z$}, 
through the recurrence relations $F_n = F_{n-1}+F_{n-2}$, $n\ge 2$, with initial values $F_0 = 0$, $F_1 = 1$ 
and $L_n = L_{n-1}+L_{n-2}$ with $L_0 = 2$, $L_1 = 1$. For negative subscripts we have $F_{-n} = (-1)^{n-1}F_n$ 
and $L_{-n}=(-1)^n L_n$. They possess the explicit formulas (Binet forms)
\begin{equation}\label{binet}
F_n = \frac{\alpha^n - \beta^n }{\alpha - \beta },\quad L_n = \alpha^n + \beta^n,\quad n\in\mathbb Z,
\end{equation}
where
\begin{equation*}
\alpha = \frac{1+\sqrt{5}}{2} \qquad \mbox{and} \qquad \beta = - \frac{1}{\alpha} = \frac{1-\sqrt{5}}{2}.
\end{equation*}
The number $\alpha$ is the famous golden section. The relation $L_n=F_{2n}/F_{n}$ is obvious and follows directly from \eqref{binet}.
Excellent references concerning Fibonacci and Lucas numbers are the books by Koshy \cite{Koshy} and Vajda \cite{Vajda}. 

At this point we can extend the identities in Corollary~\ref{cor.duqhfnh} to Fibonacci and Lucas numbers.

\begin{theorem}
If $n$ is a positive even integer, then
\begin{equation}
\sum_{k = 1}^\infty \frac{{5^k F_n^{2k} }}{{L_n^{2k} (2k - 1)(2k)(2k + 1)}} 
= \ln (2) - \ln (L_n) + \frac{n}{\sqrt{5}} \frac{L_{2n}}{F_{2n}}\ln (\alpha) - \frac{1}{2}.
\end{equation}
\end{theorem}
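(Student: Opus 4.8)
The plan is to specialize the $p=1$ case of Theorem~\ref{thm_1}, namely identity~\eqref{eq.ajavxbw}, to the value $z = \sqrt{5}\,F_n/L_n$. With this choice one has $z^{2k} = (z^2)^k = 5^k F_n^{2k}/L_n^{2k}$, so the left-hand side of~\eqref{eq.ajavxbw} becomes exactly the series to be evaluated. The whole argument then reduces to rewriting the three terms on the right-hand side of~\eqref{eq.ajavxbw} in terms of $L_n$, $L_{2n}$, $F_{2n}$ and $\ln(\alpha)$, using the Binet forms~\eqref{binet} together with a few standard Fibonacci–Lucas identities.

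First I would dispose of the hypothesis $|z|<1$ and handle the logarithmic term. Using the identity $L_n^2 - 5F_n^2 = 4(-1)^n$ and the assumption that $n$ is even, I get $L_n^2 - 5F_n^2 = 4$, whence $z^2 = 5F_n^2/L_n^2 = 1 - 4/L_n^2 < 1$, confirming that~\eqref{eq.ajavxbw} applies. The same computation gives $1 - z^2 = 4/L_n^2$, so that
\[
\tfrac{1}{2}\ln(1-z^2) = \tfrac{1}{2}\ln\!\left(\tfrac{4}{L_n^2}\right) = \ln(2) - \ln(L_n),
\]
which already produces the first two terms of the claimed closed form; the isolated $-\tfrac12$ in~\eqref{eq.ajavxbw} matches the last term directly.

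The substantive step, which I expect to be the main obstacle, is the evaluation of $\tfrac{1}{2}\bigl(z+\tfrac1z\bigr)\arctanh(z)$. For the prefactor I would use $L_n^2 + 5F_n^2 = 2L_{2n}$ together with $F_nL_n = F_{2n}$ (the relation $L_n = F_{2n}/F_n$ noted after~\eqref{binet}) to compute
\[
z + \frac{1}{z} = \frac{z^2+1}{z} = \frac{5F_n^2 + L_n^2}{\sqrt{5}\,F_nL_n} = \frac{2L_{2n}}{\sqrt{5}\,F_{2n}}.
\]
For the $\arctanh$ factor I would pass to the logarithmic representation and exploit the Binet forms. Since $\sqrt{5}\,F_n = \alpha^n - \beta^n$ and $L_n = \alpha^n + \beta^n$, one finds $L_n + \sqrt{5}F_n = 2\alpha^n$ and $L_n - \sqrt{5}F_n = 2\beta^n$, so that
\[
\frac{1+z}{1-z} = \frac{L_n + \sqrt{5}F_n}{L_n - \sqrt{5}F_n} = \left(\frac{\alpha}{\beta}\right)^{\!n} = (-\alpha^2)^n = (-1)^n \alpha^{2n}.
\]
Here is precisely where evenness of $n$ is essential: for $n$ even the ratio equals $\alpha^{2n}>0$, the logarithm is real, and $\arctanh(z) = \tfrac12\ln(\alpha^{2n}) = n\ln(\alpha)$ (for $n$ odd the argument would be negative and, consistently, $1-z^2$ would be negative, so the formula would fail). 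Combining, $\tfrac{1}{2}\bigl(z+\tfrac1z\bigr)\arctanh(z) = \tfrac{n}{\sqrt{5}}\tfrac{L_{2n}}{F_{2n}}\ln(\alpha)$.

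Assembling the three contributions yields
\[
\sum_{k=1}^\infty \frac{5^k F_n^{2k}}{L_n^{2k}(2k-1)(2k)(2k+1)} = \ln(2) - \ln(L_n) + \frac{n}{\sqrt{5}}\frac{L_{2n}}{F_{2n}}\ln(\alpha) - \frac{1}{2},
\]
which is the assertion. Everything beyond the $\arctanh$ evaluation is routine algebra with the identities $L_n^2 - 5F_n^2 = 4(-1)^n$, $L_n^2 + 5F_n^2 = 2L_{2n}$ and $F_{2n} = F_nL_n$; the only genuinely delicate point is keeping track of the sign $(-1)^n$ that controls both convergence ($|z|<1$) and the reality of the inverse hyperbolic arctangent, and which forces the restriction to even $n$.
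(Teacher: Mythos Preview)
Your proof is correct and follows exactly the same route as the paper: substitute $z=\sqrt{5}\,F_n/L_n$ into~\eqref{eq.ajavxbw} and simplify using the Binet forms together with the identities $L_n^2-5F_n^2=4(-1)^n$, $L_n^2+5F_n^2=2L_{2n}$ and $F_{2n}=F_nL_n$. In fact you supply more detail than the paper, which merely instructs the reader to simplify; your explicit handling of the $\arctanh$ term via $(1+z)/(1-z)=(\alpha/\beta)^n=(-1)^n\alpha^{2n}$ and your remark on why the parity restriction is needed are both welcome additions.
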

\begin{proof}
Set $z=F_n\sqrt 5/L_n$ in~\eqref{eq.ajavxbw}. When simplifying use the Binet formulas and the basic identities
\begin{equation*}
L_n^2 = 5F_n^2 + (-1)^n 4, \qquad 5 F_n^2 = L_{2n} + (-1)^{n+1} 2, \qquad L_n^2 = L_{2n} + (-1)^n 2.
\end{equation*}
\end{proof}
\begin{theorem}
If $n$ is an odd integer, then
\begin{equation}
\sum_{k = 1}^\infty \frac{L_n^{2k}}{F_n^{2k} 5^k (2k - 1)(2k)(2k + 1)} = \ln (2) - \frac{1}{2} \ln (5) 
- \ln (F_n) + \frac{n}{{\sqrt 5 }} \frac{L_{2n}}{F_{2n}}\ln (\alpha) - \frac{1}{2}.
\end{equation}
\end{theorem}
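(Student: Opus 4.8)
The plan is to recognize the series as a special value of $F(z,1)$ and substitute $z = L_n/(\sqrt5\,F_n)$ into the closed form~\eqref{eq.ajavxbw}. With this choice, $L_n^{2k}/(F_n^{2k}5^k) = (L_n^2/(5F_n^2))^k = z^{2k}$, so the left-hand side is exactly $F(z,1)$. First I would confirm that $|z|<1$ so that the formula applies: from the stated identity $L_n^2 = 5F_n^2 + (-1)^n 4$, the oddness of $n$ gives $L_n^2 = 5F_n^2 - 4 < 5F_n^2$, hence $0<z<1$. (Note this is the reverse of the even-$n$ companion theorem, where $L_n^2 > 5F_n^2$ forces the substitution $z=F_n\sqrt5/L_n$ instead.)

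Next I would evaluate the three ingredients of~\eqref{eq.ajavxbw} in turn. For the logarithm, $1-z^2 = (5F_n^2 - L_n^2)/(5F_n^2) = 4/(5F_n^2)$, so $\tfrac12\ln(1-z^2) = \ln 2 - \tfrac12\ln 5 - \ln F_n$. For the rational prefactor, $z + 1/z = (L_n^2 + 5F_n^2)/(\sqrt5\,F_nL_n)$; using the $n$-odd forms $L_n^2 = L_{2n}-2$ and $5F_n^2 = L_{2n}+2$ of the listed identities, the numerator collapses to $2L_{2n}$, and with $F_nL_n = F_{2n}$ (from $L_n = F_{2n}/F_n$) this yields $z+1/z = 2L_{2n}/(\sqrt5\,F_{2n})$.

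The remaining and most delicate step is the $\arctanh$ factor, handled via its logarithmic representation together with the Binet forms. Writing $\sqrt5\,F_n = \alpha^n - \beta^n$ and $L_n = \alpha^n + \beta^n$, the quotient becomes $(1+z)/(1-z) = (\sqrt5 F_n + L_n)/(\sqrt5 F_n - L_n) = 2\alpha^n/(-2\beta^n) = -(\alpha/\beta)^n$; since $\beta = -1/\alpha$ gives $\alpha/\beta = -\alpha^2$, this equals $-(-1)^n\alpha^{2n} = \alpha^{2n}$ precisely because $n$ is odd. Hence $\arctanh(z) = \tfrac12\ln(\alpha^{2n}) = n\ln\alpha$. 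The main obstacle is keeping the signs straight here: it is exactly the oddness of $n$ that forces $(1+z)/(1-z)>0$, so the logarithm is real and collapses to $n\ln\alpha$ rather than something branch-dependent. Combining the pieces, $\tfrac12(z+1/z)\arctanh(z) = (n/\sqrt5)(L_{2n}/F_{2n})\ln\alpha$, and adding the constant $-\tfrac12$ from~\eqref{eq.ajavxbw} produces the claimed identity.
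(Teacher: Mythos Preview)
Your proof is correct and follows exactly the paper's approach: the paper's entire proof is the one-line instruction ``Set $z=L_n/(F_n\sqrt 5)$ in~\eqref{eq.ajavxbw},'' and you have carried out precisely this substitution while supplying the algebraic details (the Binet-based computation of $\arctanh(z)=n\ln\alpha$ and the collapse of $z+1/z$ via $L_n^2+5F_n^2=2L_{2n}$ and $F_nL_n=F_{2n}$). One tiny caveat: you assert $0<z<1$, but the theorem allows negative odd $n$, for which $L_n<0$ and hence $-1<z<0$; this is harmless since every term of~\eqref{eq.ajavxbw} is even in $z$, and your Binet computation of $(1+z)/(1-z)=\alpha^{2n}$ is valid for either sign.
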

\begin{proof}
Set $z=L_n/F_n\sqrt 5$ in~\eqref{eq.ajavxbw}.
\end{proof}

\begin{theorem}
If $r$ is an even integer and $s$ is any integer, then
\begin{equation}\label{eq.se119f7}
\begin{split}
&\sum_{k = 1}^\infty  {\frac{{L_{2rk + s} }}{{L_r^{2k} (2k - 1)(2k)(2k + 1)}}} \\
&\qquad= \frac{1}{4}F_s \sqrt 5 \ln \left( {\frac{{\beta ^r L_r  + 1}}{{\alpha ^r L_r  + 1}}} \right) + \frac{1}{4}L_s \ln \left( {\frac{{2L_r^2  + 1}}{{L_r^4 }}} \right) - \frac{{L_s }}{2}\\
&\qquad\quad\; + \frac{1}{8}L_r^2 F_s \sqrt 5 \ln \left( {\left( {\frac{{L_r  + \alpha ^r }}{{L_r  + \beta ^r }}} \right)\alpha ^{2r} } \right) + \frac{1}{8}L_r^2 L_s \ln (2L_r^2  + 1)\\
&\qquad\quad\;\; - \frac{{(L_r^2  - 1)}}{{8L_r }}F_{r + s} \sqrt 5 \ln \left( {\left( {\frac{{L_r  + \alpha ^r }}{{L_r  + \beta ^r }}} \right)\alpha ^{2r} } \right) - \frac{{(L_r^2  - 1)}}{{8L_r }}L_{r + s} \ln (2L_r^2  + 1),
\end{split}
\end{equation}
\begin{equation}\label{eq.jthv04c}
\begin{split}
&\sum_{k = 1}^\infty  {\frac{{F_{2rk + s} }}{{L_r^{2k} (2k - 1)(2k)(2k + 1)}}} \\
&\qquad= \frac{1}{4}\frac{L_s}{\sqrt 5} \ln \left( {\frac{{\beta ^r L_r  + 1}}{{\alpha ^r L_r  + 1}}} \right) + \frac{1}{4}F_s  \ln \left( {\frac{{2L_r^2  + 1}}{{L_r^4 }}} \right) - \frac{{F_s }}{2}\\
&\qquad\quad\; + \frac{1}{8}L_r^2 \frac{L_s}{\sqrt 5} \ln \left( {\left( {\frac{{L_r  + \alpha ^r }}{{L_r  + \beta ^r }}} \right)\alpha ^{2r} } \right) + \frac{1}{8}L_r^2 F_s  \ln (2L_r^2  + 1)\\
&\qquad\quad\;\; - \frac{{(L_r^2  - 1)}}{{8L_r }}\frac{L_{r + s}}{\sqrt 5} \ln \left( {\left( {\frac{{L_r  + \alpha ^r }}{{L_r  + \beta ^r }}} \right)\alpha ^{2r} } \right) - \frac{{(L_r^2  - 1)}}{{8L_r }}F_{r + s} \ln (2L_r^2  + 1).
\end{split}
\end{equation}
\end{theorem}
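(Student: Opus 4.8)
The plan is to reduce both identities to the single-variable evaluation~\eqref{eq.ajavxbw} by splitting the Lucas (resp.\ Fibonacci) numerator through its Binet form. Writing $z_1 = \alpha^r/L_r$ and $z_2 = \beta^r/L_r$ and using $\alpha^r\beta^r = (\alpha\beta)^r = 1$ (which holds precisely because $r$ is even), one has $L_{2rk+s}/L_r^{2k} = \alpha^s z_1^{2k} + \beta^s z_2^{2k}$ and $\sqrt5\,F_{2rk+s}/L_r^{2k} = \alpha^s z_1^{2k} - \beta^s z_2^{2k}$. Since $r$ is even, $\beta^r>0$ and $0<z_2<z_1<1$, so both series converge and~\eqref{eq.ajavxbw} applies term by term. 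Hence
\[
\sum_{k\ge1}\frac{L_{2rk+s}}{L_r^{2k}(2k-1)(2k)(2k+1)} = \alpha^s F(z_1,1) + \beta^s F(z_2,1),
\]
and the Fibonacci sum equals $\frac1{\sqrt5}\bigl(\alpha^s F(z_1,1)-\beta^s F(z_2,1)\bigr)$. Thus the two theorems are the symmetric and antisymmetric combinations of the same pair of evaluations, and it suffices to compute $F(z_1,1)$ and $F(z_2,1)$.

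Next I would put the two ingredients of~\eqref{eq.ajavxbw} into closed form. From $1-z_1 = \beta^r/L_r$ and $1+z_1=(L_r+\alpha^r)/L_r$ the logarithmic form of the arctangent gives $\arctanh z_1 = \frac12\ln\frac{L_r+\alpha^r}{\beta^r}$, while $1-z_1^2 = (\beta^r L_r+1)/L_r^2$ (using $\beta^r(L_r+\alpha^r)=\beta^{2r}+2=\beta^r L_r+1$); the formulas for $z_2$ follow by exchanging $\alpha\leftrightarrow\beta$. Substituting these, the $\frac12\ln(1-z^2)$ and constant parts of~\eqref{eq.ajavxbw} assemble into the first three terms of~\eqref{eq.se119f7}: the identity $(\alpha^r L_r+1)(\beta^r L_r+1)=2L_r^2+1$ turns the symmetric combination into $\ln(2L_r^2+1)$ and the antisymmetric one into $\ln\frac{\beta^r L_r+1}{\alpha^r L_r+1}$, with $\alpha^s\pm\beta^s$ recognised as $L_s$ and $\sqrt5 F_s$.

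The remaining work is the arctangent term $\frac14\alpha^s(z_1+\tfrac1{z_1})\ln\frac{L_r+\alpha^r}{\beta^r} + \frac14\beta^s(z_2+\tfrac1{z_2})\ln\frac{L_r+\beta^r}{\alpha^r}$. Writing $\ln\frac{L_r+\alpha^r}{\beta^r} = \ln(L_r+\alpha^r)+\ln\alpha^r$ and collecting the coefficients of $\ln(L_r+\alpha^r)\pm\ln(L_r+\beta^r)$ and of $\ln\alpha^r$, one uses $\alpha^s z_1 = \alpha^{r+s}/L_r$, $\alpha^s/z_1 = L_r\alpha^{s-r}$ (and likewise for $\beta$) to find the surviving coefficients $\frac18\bigl(\frac{L_{r+s}}{L_r}+L_r L_{s-r}\bigr)$ and $\frac18\bigl(\frac{\sqrt5 F_{r+s}}{L_r}+L_r\sqrt5 F_{s-r}\bigr)$. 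These are brought to the stated $\frac18 L_r^2(\cdots)-\frac{L_r^2-1}{8L_r}(\cdots)$ shape by the product identities $L_r L_s = L_{r+s}+L_{s-r}$ and $L_r F_s = F_{r+s}+F_{s-r}$ (valid for even $r$), which eliminate the $s-r$ indices in favour of $s$ and $r+s$. The Fibonacci identity~\eqref{eq.jthv04c} drops out of the antisymmetric combination by the same computation, with the roles of $L_\bullet$ and $\sqrt5 F_\bullet$ interchanged.

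The main obstacle is purely organisational: the factor $z+\frac1z$ in~\eqref{eq.ajavxbw} is not symmetric under $z_1\leftrightarrow z_2$ and spreads each logarithm across the index shifts $s$ and $s\pm r$; the crux is to carry this expansion cleanly and to invoke the two product identities at exactly the right moment so that the shifted indices collapse to the $L_s, F_s, L_{r+s}, F_{r+s}$ appearing on the right. No step is deep, but the risk of sign or index slips is high, so I would organise the computation around the four logarithms $\ln(L_r+\alpha^r)$, $\ln(L_r+\beta^r)$, $\ln\alpha^r$ and $\ln L_r$, tracking their coefficients separately before recombining.
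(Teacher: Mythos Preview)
Your approach is correct and is exactly the paper's method: substitute $z=\alpha^r/L_r$ and $z=\beta^r/L_r$ into~\eqref{eq.ajavxbw}, then form the $\alpha^s$- and $\beta^s$-weighted sum and difference. The paper states this in a single sentence and omits all of the algebra; your proposal simply fills in the simplification details (the factorisations $1-z_1=\beta^r/L_r$, $(\alpha^rL_r+1)(\beta^rL_r+1)=2L_r^2+1$, and the product formulas $L_rL_s=L_{r+s}+L_{s-r}$, $L_rF_s=F_{r+s}+F_{s-r}$ for even $r$) that the paper leaves to the reader.
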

\begin{proof}
Set $z=\alpha^r/L_r$ and $z=\beta^r/L_r$, in turn, in~\eqref{eq.ajavxbw}; add and subtract the resulting identities to obtain~\eqref{eq.se119f7} and~\eqref{eq.jthv04c}.
\end{proof}

\begin{theorem}
If $s$ is any integer, then
\begin{gather}
\sum_{k = 1}^\infty \frac{{L_{2k + s} }}{{4^k (2k - 1)(2k)(2k + 1)}} =  - L_s \ln(2) + \frac{5}{{16}}L_{s + 1} \ln(5) 
+ \frac{{\sqrt 5 }}{8}(15F_{s - 1}  - F_s )\ln (\alpha) - \frac{{L_s }}{2}, \label{eq.pmnct3y}\\
\sum_{k = 1}^\infty \frac{{F_{2k + s} }}{{4^k (2k - 1)(2k)(2k + 1)}} =  - F_s \ln(2) + \frac{5}{{16}}F_{s + 1} \ln(5) 
+ \frac{1}{{8\sqrt 5 }}(15L_{s - 1}  - L_s )\ln (\alpha) - \frac{{F_s }}{2} \label{eq.i9wdbea}.
\end{gather}
\end{theorem}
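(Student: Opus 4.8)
The plan is to reduce both sums to two evaluations of the known closed form \eqref{eq.ajavxbw}, one at $z=\alpha/2$ and one at $z=\beta/2$. First I would invoke the Binet formulas \eqref{binet} to split the summands: writing $L_{2k+s}=\alpha^s\alpha^{2k}+\beta^s\beta^{2k}$ and $F_{2k+s}=(\alpha^s\alpha^{2k}-\beta^s\beta^{2k})/\sqrt5$, and observing that $\alpha^{2k}/4^k=(\alpha/2)^{2k}$ and $\beta^{2k}/4^k=(\beta/2)^{2k}$, the two series collapse to
\begin{equation*}
\sum_{k=1}^\infty\frac{L_{2k+s}}{4^k(2k-1)(2k)(2k+1)}=\alpha^s F(\alpha/2,1)+\beta^s F(\beta/2,1)
\end{equation*}
and the analogous $\sqrt5$-weighted difference for the Fibonacci sum. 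Since $|\alpha/2|<1$ and $|\beta/2|<1$, formula \eqref{eq.ajavxbw} applies at both points, so the whole problem reduces to computing $F(\alpha/2,1)$ and $F(\beta/2,1)$ explicitly.

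The second step is to evaluate these two numbers in closed form using only the arithmetic of the golden section. Using $\alpha^2=\alpha+1$, $1/\alpha=\alpha-1=-\beta$, and $\alpha^3=2+\sqrt5$, I would simplify the three ingredients of \eqref{eq.ajavxbw}: the argument of the arctangent becomes
\begin{equation*}
\frac{1+\alpha/2}{1-\alpha/2}=\frac{2+\alpha}{2-\alpha}=\sqrt5\,\alpha^3,
\end{equation*}
so that $\arctanh(\alpha/2)=\tfrac14\ln5+\tfrac32\ln\alpha$; likewise $1-(\alpha/2)^2=\sqrt5\,\alpha^{-1}/4$ and $\alpha/2+2/\alpha=5\alpha/2-2$. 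Substituting yields a clean expression of the shape $F(\alpha/2,1)=-\ln2-\tfrac12+\tfrac{5\alpha}{16}\ln5+\tfrac{15\alpha-16}{8}\ln\alpha$. Sending $\sqrt5\mapsto-\sqrt5$ (equivalently $\alpha\leftrightarrow\beta$ with $\ln\alpha\mapsto-\ln\alpha$, since $-\beta=1/\alpha$) produces the companion value $F(\beta/2,1)$; here I would note that $\beta/2$ is negative but lies in $(-1,1)$, so both $1\pm\beta/2$ are positive and the real logarithmic form of $\arctanh$ is used with no branch ambiguity.

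Finally I would recombine. Forming $\alpha^s F(\alpha/2,1)+\beta^s F(\beta/2,1)$, the $\ln2$ and constant channels collapse to $-L_s\ln2$ and $-L_s/2$ via $\alpha^s+\beta^s=L_s$, while the $\ln5$ coefficient becomes $\tfrac{5}{16}(\alpha^{s+1}+\beta^{s+1})\ln5=\tfrac{5}{16}L_{s+1}\ln5$. The $\ln\alpha$ coefficient first appears as $\tfrac{\sqrt5}{8}(15F_{s+1}-16F_s)$, and the one genuinely non-mechanical point is recognizing, through $F_{s+1}=F_s+F_{s-1}$, that $15F_{s+1}-16F_s=15F_{s-1}-F_s$, which is exactly the coefficient in \eqref{eq.pmnct3y}. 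The Fibonacci identity \eqref{eq.i9wdbea} follows identically from the $\sqrt5$-weighted difference, using $\alpha^s-\beta^s=\sqrt5\,F_s$ together with the same recurrence applied to the Lucas numbers. The main obstacle is thus not conceptual but the bookkeeping of the three logarithmic channels ($\ln2$, $\ln5$, $\ln\alpha$) through the golden-section simplifications, and in particular arranging the index shift so that it lands on $F_{s-1}$ and $L_{s-1}$.
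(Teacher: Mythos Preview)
Your proposal is correct and follows exactly the paper's approach: evaluate \eqref{eq.ajavxbw} at $z=\alpha/2$ and $z=\beta/2$ and combine via the Binet formulas. The paper states this in one sentence, whereas you carry out the golden-section simplifications and the index shift $15F_{s+1}-16F_s=15F_{s-1}-F_s$ explicitly; all of these details check out.
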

\begin{proof}
Set $z=\alpha/2$ and $z=\beta/2$, in turn, in~\eqref{eq.ajavxbw}; add and subtract the resulting identities to obtain~\eqref{eq.pmnct3y} and~\eqref{eq.i9wdbea}.
\end{proof}

To prove the main results of this section we will need the following nontrivial identities for the dilogarithm \cite{Lewin}:
\begin{equation}\label{dilog_1}
\Li_2 (-1) = - \frac{\pi^2}{12},
\end{equation}
\begin{equation}\label{dilog_2}
\Li_2 (- \alpha) = - \frac{\pi^2}{10} - \ln^2 (\alpha),
\end{equation}
\begin{equation}\label{dilog_3}
\Li_2 (- \beta) = \Li_2\Big (\frac{1}{\alpha} \Big ) = \frac{\pi^2}{10} - \ln^2 (\alpha),
\end{equation}
\begin{equation}\label{dilog_4}
\Li_2 (\beta^2 ) = \Li_2\Big (\frac{1}{\alpha^2} \Big ) = \frac{\pi^2}{15} - \ln^2 (\alpha).
\end{equation}

Inserting $z=1/\alpha$ and $z=1/\sqrt{\alpha}$ in \eqref{eq.ajavxbw} and \eqref{eq.ekwvw5d}, respectively, 
using trivial properties of the golden section, and \eqref{dilog_3} and \eqref{dilog_4} we get the evaluations
\begin{equation*}
\sum_{k=1}^\infty \frac{1}{\alpha^{2k} (2k-1)(2k+1)} = \frac{1}{2} - \frac{3}{4} \ln (\alpha),
\end{equation*}
\begin{equation*}
\sum_{k=1}^\infty \frac{1}{\alpha^{2k} (2k-1)(2k)(2k+1)} = \frac{1}{2}\Big ( \frac{3\sqrt{5}}{2} \ln (\alpha) - \ln (\alpha) - 1\Big ),
\end{equation*}
\begin{equation*}
\sum_{k=1}^\infty \frac{1}{\alpha^{2k} (2k-1)(2k)^2 (2k+1)} 
= \frac{1}{2} - \frac{\pi^2}{60} - \frac{3}{4} \ln (\alpha) + \frac{1}{4} \ln^2 (\alpha),
\end{equation*}
as well as
\begin{equation*}
\sum_{k=1}^\infty \frac{1}{\alpha^{k} (2k-1)(2k+1)} 
= \frac{1}{2} - \frac{1}{2 \alpha \sqrt{\alpha}} \arctanh \Big (\frac{1}{\sqrt{\alpha}} \Big ),
\end{equation*}
\begin{equation*}
\sum_{k=1}^\infty \frac{1}{\alpha^{k} (2k-1)(2k)(2k+1)} 
= \frac{ \alpha \sqrt{\alpha}}{2} \arctanh \Big (\frac{1}{\sqrt{\alpha}} \Big ) - \frac{1}{2} - \ln (\alpha ),
\end{equation*}
\begin{equation*}
\sum_{k=1}^\infty \frac{1}{\alpha^{k} (2k-1)(2k)^2 (2k+1)} 
= \frac{1}{2} - \frac{\pi^2}{40} - \frac{1}{2 \alpha \sqrt{\alpha}} \arctanh \Big (\frac{1}{\sqrt{\alpha}} \Big ) + \frac{1}{4} \ln^2 (\alpha).
\end{equation*} 

A series involving $k^3$ in the denominator comes from combining \eqref{eq.uu2c0rv} with the identity (consult \cite{Lewin} for a derivation)
\begin{equation}\label{trilog_id}
\Li_3 \Big ( \frac{1}{\alpha^2} \Big ) = \frac{4}{5} \zeta (3) + \frac{2}{3} \ln^3 (\alpha) - \frac{2}{15} \pi^2 \ln (\alpha).
\end{equation}
The result is
\begin{equation*}
\sum_{k=1}^\infty \frac{1}{\alpha^{2k} (2k-1)(2k)^3 (2k+1)} 
= \Big ( \frac{\pi^2}{60} + 1 + \frac{3}{4\alpha^3}\Big ) \ln (\alpha) - \frac{1}{12} \ln^3 (\alpha) - \frac{1}{10} \zeta (3) - \frac{1}{2}.
\end{equation*} \\

Recently, Campbell published two papers about dilogarithm identities \cite{Camp1,Camp2}. 
The paper \cite{Camp1} is about extending the work of Lima \cite{Lima} via Fourier–Legendre theory (polynomial expansion).
It contains five two-term dilogarithm identities which are rediscoveries of previously known dilogarithm identities.
All five results are well-documented in the book by Lewin \cite{Lewin}. One such identity is \cite[Eq. (9)]{Camp1} 
\begin{equation}
\Li_2\left (\frac{1}{\alpha^3} \right ) - \Li_2 (\beta^3 ) = \frac{\alpha^3(\pi^2 - 18 \ln^2 (\alpha) )}{3(\alpha^6 - 1)},
\end{equation}
which in view of
\begin{equation*}
\alpha^6 - 1 = (\alpha^3 - 1)(\alpha^3 + 1) = 4 \alpha^3,
\end{equation*}
can be nicely simplified resulting in
\begin{equation}\label{Campbell1}
\Li_2\left (\frac{1}{\alpha^3} \right ) - \Li_2 (\beta^3 ) = \frac{\pi^2}{12} - \frac{3}{2} \ln^2 (\alpha).
\end{equation}

Identity \eqref{Campbell1} is identity (1.70) of Lewin. Also Lima's main result from 2012 \cite{Lima} 
is Lewin's equation (1.68). Campbell's paper \cite{Camp2} is an addendum to his first 
publication \cite{Camp1}, where he references Lewin's work, discusses his results from \cite{Camp1}, and gives a historical survey. \\

We also recommend the papers by Boyadzhiev and Manns \cite{Boyadzhiev} and Stewart \cite{Stewart}. 
Boaydzhiev and Manns discuss several topics related to polylogarithms with special focus on dilogarithms. 
Stewart offers a number of proofs for ``Lima's identity'' while making use of known functional relations 
for the dilogarithm function. \\

In what follows, we present presumably new nontrivial two-term dilogarithm identities involving the golden section 
based on Lewin's book \cite{Lewin}. Such relations can be derived in a fairly straightforward manner using certain transformations.

\begin{theorem}\label{thm_2}
We have the following relations: 
\begin{equation}\label{main_21}
\Li_2\left (\frac{\alpha}{2} \right ) + \Li_2\left (\frac{\beta}{2} \right ) = \frac{\pi^2}{12} + 2\ln^2 (\alpha) - \ln^2 (2),
\end{equation}
\begin{equation}\label{main_22}
\Li_2\left (\frac{\alpha^3}{5} \right ) + \Li_2\left (\frac{\beta^3}{5} \right ) = \frac{\pi^2}{12} + 6\ln^2 (\alpha) - 2 \ln^2 (2)
+ 2\ln(2) \ln(5) - \ln^2 (5) - \Li_2 \Big(- \frac{1}{4} \Big ),
\end{equation}
\begin{equation}\label{main_23}
\Li_2 \left( \frac{{\alpha ^r }}{{L_r }} \right) + \Li_2 \left( \frac{{\beta ^r }}{{L_r }} \right) 
= \frac{\pi^2}{6} + r^2 \ln^2 (\alpha) - \ln^2 (L_r), \qquad \mbox{$r\geq 0$, $r$ even},
\end{equation}
in particular
\begin{equation}\label{main_23_1}
\Li_2 \left( \frac{{\alpha^2}}{{3}} \right) + \Li_2 \left( \frac{{\beta ^2 }}{3} \right) 
= \frac{\pi^2}{6} + 4 \ln^2 (\alpha) - \ln^2 (3),
\end{equation}
\begin{equation}\label{main_24}
\Li_2 \left( \frac{{\alpha ^r }}{{\sqrt{5} F_r}} \right) + \Li_2 \left( \frac{{- \beta ^r }}{{\sqrt{5} F_r}} \right) 
= \frac{\pi^2}{6} + r^2 \ln^2 (\alpha) - \frac{1}{4} \ln^2 (5) - \ln(5)\ln(F_r) - \ln^2 (F_r),\quad\mbox{$r\geq 1$, $r$ odd},
\end{equation}
in particular
\begin{equation}\label{main_24_1}
\Li_2\left (\frac{\alpha}{\sqrt{5}} \right ) + \Li_2\left (\frac{-\beta}{\sqrt{5}} \right ) 
= \frac{\pi^2}{6} + \ln^2 (\alpha) - \frac{1}{4} \ln^2 (5),
\end{equation}
\begin{equation}\label{main_25}
\Li_2\left (\frac{\alpha^2}{4} \right ) + \Li_2\left (\frac{\beta^2}{4} \right ) 
= \frac{\pi^2}{6} + 2 \ln^2 (\alpha) - \frac{1}{2} \ln^2 (5) + 2\ln(2)\ln(5) - 4 \ln^2(2) - \Li_2 \Big( \frac{1}{5} \Big )
\end{equation}
and
\begin{equation}\label{main_26}
\Li_2\left (\frac{\alpha}{3} \right ) + \Li_2\left (\frac{\beta}{3} \right ) 
= \ln^2 (\alpha) - \frac{1}{4} \ln^2 (5) + \ln(3)\ln(5) - \ln^2(3) + \frac{3}{2} \Li_2 \Big( \frac{1}{5} \Big ) 
- \frac{1}{2}\Li_2 \Big( \frac{1}{25} \Big ). 
\end{equation}
\end{theorem}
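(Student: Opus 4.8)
The plan is to prove every identity in Theorem \ref{thm_2} by transporting the golden-section arguments to tractable points using the classical functional equations of the dilogarithm collected in Lewin's book: Euler's reflection formula $\Li_2(x)+\Li_2(1-x)=\frac{\pi^2}{6}-\ln(x)\ln(1-x)$, Landen's transformation $\Li_2(x)+\Li_2\left(\frac{x}{x-1}\right)=-\frac12\ln^2(1-x)$, the inversion formula $\Li_2(x)+\Li_2\left(\frac1x\right)=-\frac{\pi^2}{6}-\frac12\ln^2(-x)$ valid for $x<0$, and the duplication formula $\Li_2(x^2)=2\Li_2(x)+2\Li_2(-x)$. Throughout I would lean on the elementary golden-section algebra $\alpha\beta=-1$, $\alpha+\beta=1$, $\alpha^r+\beta^r=L_r$, $\alpha^r-\beta^r=\sqrt5\,F_r$, together with the ``complement'' evaluations $1-\frac{\alpha}{2}=\frac{\beta^2}{2}$, $1-\frac{\beta}{2}=\frac{\alpha^2}{2}$, $2+\alpha=\sqrt5\,\alpha$, $2+\beta=-\sqrt5\,\beta$ and the like; these are exactly what force each transformed argument to land on another golden-section point.

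First I would dispatch the four identities \eqref{main_23}, \eqref{main_23_1}, \eqref{main_24} and \eqref{main_24_1}. In each of these the two arguments sum to $1$: for \eqref{main_23} one has $\frac{\alpha^r}{L_r}+\frac{\beta^r}{L_r}=\frac{L_r}{L_r}=1$, and for \eqref{main_24} one has $\frac{\alpha^r}{\sqrt5 F_r}+\frac{-\beta^r}{\sqrt5 F_r}=\frac{\alpha^r-\beta^r}{\sqrt5 F_r}=1$. Moreover $\beta^r=\alpha^{-r}$ for $r$ even and $-\beta^r=\alpha^{-r}$ for $r$ odd, so both arguments lie in $(0,1)$ and no branch issues arise. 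Euler's reflection formula then gives the result immediately, the right-hand side being $\frac{\pi^2}{6}$ minus a product of logarithms that, because one factor carries $\alpha^r$ and the other $\alpha^{-r}$, collapses to $r^2\ln^2\alpha$ minus the squared logarithm of the denominator. Specializing $r=2$ and $r=1$ yields \eqref{main_23_1} and \eqref{main_24_1}.

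The identity \eqref{main_21} is the single case that reduces completely. Here the arguments sum only to $\frac12$, so I would apply Landen's transformation to each term: using $1-\frac{\alpha}{2}=\frac{\beta^2}{2}$ and $1-\frac{\beta}{2}=\frac{\alpha^2}{2}$ one computes $\frac{\alpha/2}{\alpha/2-1}=-\alpha^3$ and $\frac{\beta/2}{\beta/2-1}=\alpha^{-3}$, whence $\Li_2\left(\frac{\alpha}{2}\right)+\Li_2\left(\frac{\beta}{2}\right)=-\Li_2(-\alpha^3)-\Li_2(\alpha^{-3})$ plus elementary squared logarithms. The pair $\Li_2(-\alpha^3)+\Li_2(\alpha^{-3})$ I would evaluate by combining the inversion formula applied to $\Li_2(-\alpha^3)$ (which pairs it with $\Li_2(\beta^3)$ since $-\alpha^{-3}=\beta^3$) with Campbell's identity \eqref{Campbell1}, written as $\Li_2(\alpha^{-3})-\Li_2(\beta^3)=\frac{\pi^2}{12}-\frac32\ln^2\alpha$; the two combine to the purely elementary value $-\frac{\pi^2}{12}-6\ln^2\alpha$, and \eqref{main_21} follows. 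For \eqref{main_25} I would exploit $\frac{\alpha^2}{4}=\left(\frac{\alpha}{2}\right)^2$, $\frac{\beta^2}{4}=\left(\frac{\beta}{2}\right)^2$ and the duplication formula to write the left-hand side as $2\left(\Li_2\left(\frac{\alpha}{2}\right)+\Li_2\left(\frac{\beta}{2}\right)\right)+2\left(\Li_2\left(-\frac{\alpha}{2}\right)+\Li_2\left(-\frac{\beta}{2}\right)\right)$; the first bracket is \eqref{main_21}, while Landen's map sends $-\frac{\alpha}{2}\mapsto\frac{1}{\sqrt5}$ and $-\frac{\beta}{2}\mapsto-\frac{1}{\sqrt5}$, after which the duplication formula gives $\Li_2\left(\frac{1}{\sqrt5}\right)+\Li_2\left(-\frac{1}{\sqrt5}\right)=\frac12\Li_2\left(\frac15\right)$. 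This is precisely the irreducible residual appearing in \eqref{main_25}.

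Finally, \eqref{main_22} and \eqref{main_26} follow from the same Landen--reflection--inversion--duplication machinery, only with longer chains: the transformations transport $\frac{\alpha^3}{5},\frac{\beta^3}{5}$ and $\frac{\alpha}{3},\frac{\beta}{3}$ to rational or $\sqrt5$-rational points such as $\frac15$, $\frac1{25}$ and $-\frac14$ (note, for instance, that Landen sends $\frac15\mapsto-\frac14$), whose dilogarithm values admit no elementary closed form and therefore survive on the right-hand side---one residual $\Li_2\left(-\frac14\right)$ for \eqref{main_22} and two residuals $\Li_2\left(\frac15\right),\Li_2\left(\frac1{25}\right)$ for \eqref{main_26}. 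I expect the main obstacle to be bookkeeping rather than ideas: the Landen and inversion steps repeatedly push arguments outside $(0,1)$ or make them negative, so one must track the correct real branch of each logarithm and verify that the $\ln^2(-x)$-type correction terms carry the right sign. Once the correct orbit of golden-section points is identified for each argument and the complementary pairs are collected, the $\pi^2$- and $\ln^2$-contributions cancel down to the stated constants, with the remaining $\Li_2$ values at $-\frac14,\frac15,\frac1{25}$ left intact.
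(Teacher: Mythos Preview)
Your treatment of \eqref{main_23}, \eqref{main_23_1}, \eqref{main_24}, \eqref{main_24_1} via Euler's reflection formula is exactly what the paper does. For \eqref{main_21} and \eqref{main_25}, however, your route is genuinely different. The paper's workhorse throughout is Abel's five-term relation
\[
\Li_2\!\left(\frac{x}{1-x}\frac{y}{1-y}\right)=\Li_2\!\left(\frac{x}{1-y}\right)+\Li_2\!\left(\frac{y}{1-x}\right)-\Li_2(x)-\Li_2(y)-\ln(1-x)\ln(1-y),
\]
applied with $(x,y)=(\alpha/2,\beta/2)$, $(\alpha^3/5,\beta^3/5)$, $(\alpha^2/4,\beta^2/4)$, $(\alpha/3,\beta/3)$ respectively. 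For \eqref{main_21} this lands directly on $\Li_2(-1)$, $\Li_2(-\alpha)$, $\Li_2(-\beta)$, whose closed forms \eqref{dilog_1}--\eqref{dilog_3} are quoted; no detour through $\Li_2(\pm\alpha^{\pm3})$ or Campbell's identity \eqref{Campbell1} is needed. For \eqref{main_25} the five-term relation reduces the left side to \eqref{main_24_1} plus $\Li_2(1/5)$ in one step, whereas you pass through duplication, \eqref{main_21}, and two further Landen maps. Your arguments are correct and the computations check out, but the paper's single functional equation handles all four ``non-reflection'' cases uniformly and with less branch-tracking.

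Where your proposal is thin is precisely \eqref{main_22} and \eqref{main_26}: you assert that the Landen--inversion--duplication chain terminates at $\Li_2(-1/4)$ and at $\Li_2(1/5),\Li_2(1/25)$, but you do not exhibit the chain. A single Landen step on $\alpha^3/5$ already produces $-\alpha^5/2$, and it is not at all obvious how to steer from there to $-1/4$ without effectively reconstituting Abel's identity. In the paper, by contrast, the choices $(x,y)=(\alpha^3/5,\beta^3/5)$ and $(x,y)=(\alpha/3,\beta/3)$ in the five-term relation yield $\frac{xy}{(1-x)(1-y)}=-\tfrac14$ and $-\tfrac15$ immediately, with the cross-terms $\frac{x}{1-y},\frac{y}{1-x}$ landing on $\alpha/2,\beta/2$ and $\pm 1/\sqrt5$; the residual $\Li_2$ values then appear for free. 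So your toolkit is sufficient in principle, but for these two identities you have not actually closed the argument, and adding Abel's relation to your arsenal would make the missing cases one-liners.
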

\begin{proof}
Many relations of this kind follow from the two-term identity \cite{Lewin}
\begin{equation}\label{dilog_id}
\Li_2 \Big ( \frac{x}{1-x} \frac{y}{1-y} \Big ) = \Li_2 \Big ( \frac{x}{1-y} \Big ) + \Li_2 \Big ( \frac{y}{1-x} \Big )
- \Li_2 (x) - \Li_2 (y) - \ln(1-x) \ln(1-y).
\end{equation}
To prove \eqref{main_21} set $x=\alpha/2$ and $y=\beta/2$ in \eqref{dilog_id}, respectively. We have
\begin{equation*}
\frac{x}{1-x} \frac{y}{1-y} = -1, \quad \frac{x}{1-y} = -\beta, \quad \frac{y}{1-x} = -\alpha.
\end{equation*}
Hence, we get
\begin{equation*}
\Li_2\left (\frac{\alpha}{2} \right ) + \Li_2\left (\frac{\beta}{2} \right ) = \Li_2 (-\beta) + \Li_2(-\alpha) - \Li_2(-1) 
- \ln(1/(2\alpha^2))\ln(\alpha^2/2).
\end{equation*}
Equation \eqref{main_21} follows upon using \eqref{dilog_1}-\eqref{dilog_3} and simplifying. 
For \eqref{main_22} insert $x=\alpha^3/5$ and $y=\beta^3/5$ in \eqref{dilog_id}, respectively. This gives
\begin{equation*}
\Li_2\left (\frac{\alpha^3}{5} \right ) + \Li_2\left (\frac{\beta^3}{5} \right ) = 
\Li_2\left (\frac{\alpha}{2} \right ) + \Li_2\left (\frac{\beta}{2} \right ) - \Li_2 \Big(- \frac{1}{4} \Big ) 
- \ln(1-\alpha^3/5)\ln(1-\beta^3/5).
\end{equation*}
Now, use \eqref{main_21} and simplify. \\
To prove \eqref{main_23} we use the dilogarithm reflection formula
\begin{equation}\label{dilog_id2}
\Li_2 (x) + \Li_2 (1 - x) = \frac{{\pi ^2 }}{6} - \ln (x)\ln (1 - x),
\end{equation}
with $x=\alpha^r/L_r$, $r$ even, and simplify. Identity \eqref{main_23_1} is the case $r=2$ in \eqref{main_23}. \\
Identity \eqref{main_24} follows from the reflection formula \eqref{dilog_id2} with $x=\alpha^r/\sqrt{5}F_r$, $r$ odd,
after some steps of simplifications. Identity \eqref{main_24_1} is the case $r=1$ in \eqref{main_24}. \\
Next, insert $x=\alpha^2/4$ and $y=\beta^2/4$ in \eqref{dilog_id} and calculate
\begin{equation*}
\frac{x}{1-x}\frac{y}{1-y} = \frac{1}{5}, \quad \frac{x}{1-y} = \frac{\alpha}{\sqrt{5}}, \quad \frac{y}{1-x} = - \frac{\beta}{\sqrt{5}}.
\end{equation*}
Hence,
\begin{equation*}
\Li_2\left (\frac{\alpha^2}{4} \right ) + \Li_2\left (\frac{\beta^2}{4} \right ) =
\Li_2\left (\frac{\alpha}{\sqrt{5}} \right ) + \Li_2\left (\frac{-\beta}{\sqrt{5}} \right ) - \Li_2 \Big( \frac{1}{5} \Big ) 
- \ln(\sqrt{5}/(4\alpha))\ln(\sqrt{5}\alpha/4).
\end{equation*}
Using identity \eqref{main_24_1} the identity is proved after some steps of simplifications. 
Finally, the choices $x=\alpha/3$ and $y=\beta/3$ in \eqref{dilog_id} yield
\begin{equation*}
\Li_2\left (\frac{\alpha}{3} \right ) + \Li_2\left (\frac{\beta}{3} \right ) = \Li_2\left (\frac{1}{\sqrt{5}} \right ) 
+ \Li_2\left (-\frac{1}{\sqrt{5}} \right ) - \Li_2\left (-\frac{1}{5} \right ) - \ln (1-\alpha/3)\ln(1-\beta/3).
\end{equation*}
Form here apply the dilogarithm identity
\begin{equation*}
\Li_2 (x) + \Li_2 (-x) = \frac{1}{2} \Li_2 (x^2) 
\end{equation*}
twice and simplify.
\end{proof}

\begin{remark}
We observe that \eqref{main_24_1} can also be proved using \eqref{dilog_id} with $x=\alpha/\sqrt{5}$ 
and $y=-\beta/\sqrt{5}$, respectively. Since
\begin{equation*}
\frac{x}{1-x} \frac{y}{1-y} = \frac{x}{1-y} = \frac{y}{1-x} = 1,
\end{equation*}
we get the striking simple relation
\begin{equation*}
\Li_2\left (\frac{\alpha}{\sqrt{5}} \right ) + \Li_2\left (\frac{-\beta}{\sqrt{5}} \right ) 
= \Li_2(1) + \ln(\alpha/\sqrt{5})\ln(\sqrt{5}\alpha).
\end{equation*}
As $\Li_2(1)=\zeta(2)$ the proof is completed. 
\end{remark}

It is worth noting that each of the equations \eqref{main_21}-\eqref{main_26} can be stated equivalently 
as an infinite sum involving Lucas and Fibonacci numbers:
\begin{equation}
\sum_{k=1}^\infty \frac{L_k}{2^k k^2} = \frac{\pi^2}{12} + 2\ln^2 (\alpha) - \ln^2 (2),
\end{equation}
\begin{equation}
\sum_{k=1}^\infty \frac{L_{3k}}{5^k k^2} = \frac{\pi^2}{12} + 6\ln^2 (\alpha) - 2 \ln^2 (2)
+ 2\ln(2) \ln(5) - \ln^2 (5) - \sum_{k=1}^\infty \frac{(-1)^k}{4^k k^2},
\end{equation}
\begin{equation}\label{special_Luc}
\sum_{k = 1}^\infty \frac{{L_{rk}}}{{L_r^k k^2 }} = \frac{\pi^2}{6} + r^2 \ln^2 (\alpha) - \ln^2 (L_r), \qquad\mbox{$r\geq 0$, $r$ even},
\end{equation}
\begin{eqnarray}
&& \sum_{k = 1}^\infty \frac{{L_{2rk}}}{{F_r^{2k} 5^k (2k)^2 }} + \sum_{k = 1}^\infty \frac{F_{r(2k - 1)}}{F_r^{2k - 1} 5^{k - 1} (2k - 1)^2}  
= \frac{\pi^2}{6} + r^2 \ln^2 (\alpha) \nonumber \\
& & \qquad\qquad - \frac{1}{4} \ln^2 (5) - \ln(5)\ln(F_r) - \ln^2 (F_r),\quad\mbox{$r\geq 1$, $r$ odd},
\end{eqnarray}
\begin{equation}
\sum_{k=1}^\infty \frac{L_{2k}}{4^k k^2} = \frac{\pi^2}{6} + 2\ln^2 (\alpha) - \frac{1}{2} \ln^2 (5) + 2 \ln(2)\ln(5) - 4 \ln^2 (2)
- \sum_{k=1}^\infty \frac{1}{5^k k^2}
\end{equation}
and
\begin{equation}
\sum_{k=1}^\infty \frac{L_{k}}{3^k k^2} = \ln^2 (\alpha) - \frac{1}{4} \ln^2 (5) + \ln(3)\ln(5) - \ln^2 (3)
+ \frac{3}{2} \sum_{k=1}^\infty \frac{1}{5^k k^2} - \frac{1}{2} \sum_{k=1}^\infty \frac{1}{5^{2k} k^2}.
\end{equation}

\begin{theorem}\label{thm_3}
The following series involving Lucas numbers $L_n$ allow a closed form evaluation 
\begin{eqnarray}\label{main_31}
\sum_{k=1}^\infty \frac{L_k}{2^{k+2} (2k-1) k^2 (2k+1)} 
& = & 1 - \frac{\pi^2}{48} + \frac{1}{4} \ln^2 (2) - \frac{1}{2} \ln^2 (\alpha)  \nonumber \\
& & - \frac{1}{2 \alpha^2 \sqrt{2 \alpha}} \arctanh \Big (\sqrt{\frac{\alpha}{2}} \Big ) 
- \frac{\alpha^2 \sqrt{\alpha}}{2 \sqrt{2}} \arctan \Big (\sqrt{\frac{1}{2 \alpha}} \Big ),
\end{eqnarray}
\begin{eqnarray}\label{main_32}
&& \sum_{k=1}^\infty \frac{L_{3k}}{5^{k} (2k-1) (2k)^2 (2k+1)} 
= 1 - \frac{\pi^2}{48} + \frac{1}{2} \ln^2 (2) - \frac{3}{2} \ln^2 (\alpha) - \frac{1}{2} \ln(2)\ln(5) + \frac{1}{4} \ln^2(5) \nonumber \\
& & \qquad + \frac{1}{4} \Li_2 \Big (- \frac{1}{4} \Big )
- \frac{1}{\alpha^3 \sqrt{5 \alpha}} \arctanh \Big ( \frac{\alpha \sqrt{\alpha}}{\sqrt{5}} \Big ) 
- \frac{\alpha^3 \sqrt{\alpha}}{\sqrt{5}} \arctan \Big ( \frac{1}{\alpha \sqrt{5 \alpha}} \Big ),
\end{eqnarray}
\begin{equation}\label{main_33}
\begin{split}
\sum_{k = 1}^\infty \frac{{L_{rk} }}{{L_r^k (2k - 1)(2k)^2 (2k + 1)}}  &= 1 - \frac{\pi ^2}{24} - \frac{1}{4} r^2 \ln^2 (\alpha) 
+ \frac{1}{4} \ln^2 (L_r) - \frac{1}{2} \sqrt {\frac{1}{\alpha^{3r} L_r }} \arctanh\left( {\sqrt {\frac{{\alpha ^r }}{{L_r }}} } \right)\\
&\qquad - \frac{1}{2} \sqrt {\frac{\alpha ^{3r}}{L_r }} \arctanh\left( \sqrt {\frac{1}{{\alpha^r L_r }}} \right),
\quad\mbox{$r\in\mathbb N_0$, $r$ even},
\end{split}
\end{equation}
\begin{eqnarray}\label{main_34}
&& \sum_{k=1}^\infty \frac{L_{2k}}{5^{k} (4k-1) (4k)^2 (4k+1)} + \sum_{k=1}^\infty \frac{F_{2k-1}}{5^{k-1} (4k-3) (4k-2)^2 (4k-1)} 
= 1 - \frac{\pi^2}{24} + \frac{1}{16} \ln^2 (5)  \nonumber \\
&& \qquad - \frac{1}{4} \ln^2 (\alpha) - \frac{1}{2\sqrt{\sqrt{5}}}
\Big ( \frac{1}{\alpha \sqrt{\alpha}} \arctanh \Big (\sqrt{\frac{\alpha}{\sqrt{5}}} \Big ) 
+ \alpha \sqrt{\alpha} \arctanh \Big (\sqrt{\frac{1}{\sqrt{5} \alpha}} \Big ) \Big ),
\end{eqnarray}
and
\begin{eqnarray}\label{main_35}
\sum_{k=1}^\infty \frac{L_{2k}}{4^{k+1} (2k-1) k^2 (2k+1)} & = & 1 - \frac{\pi^2}{24} - \frac{1}{2} \ln^2 (\alpha) 
- \frac{9\sqrt{5}}{8}\ln (\alpha) + \frac{5}{16}\ln (5) \nonumber \\
& & + \frac{1}{8} \ln^2 (5) - \frac{1}{2}\ln (2) \ln(5) + \ln^2 (2) + \frac{1}{4} \Li_2\Big (\frac{1}{5}\Big ).
\end{eqnarray}
\end{theorem}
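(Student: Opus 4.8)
The plan is to reduce each of the five series to the master evaluations of Theorem~\ref{thm_1}---specifically the $p=2$ instances \eqref{eq.ekwvw5d} and \eqref{eq.bm10lda}---by splitting the Lucas (or Fibonacci) numerator through its Binet form \eqref{binet}, and then to collapse the two resulting dilogarithm contributions using the two-term identities of Theorem~\ref{thm_2}. Concretely, writing $L_{rk}=\alpha^{rk}+\beta^{rk}$ turns a sum with numerator $L_{rk}$ into the sum of two series of $F(z,p)$-type evaluated at conjugate arguments $z^2=\alpha^r/(\cdot)$ and $z^2=\beta^r/(\cdot)$; the denominators $(2k-1)(2k)^2(2k+1)$ call for \eqref{eq.ekwvw5d}, whereas the denominators carrying $k^2$ (as in \eqref{main_31} and \eqref{main_35}) call for the $k$-indexed form \eqref{eq.bm10lda}. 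In every case the relevant $z$ satisfies $|z|<1$, so the formulas apply directly.

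First I would carry out the split for each identity and record the pair of arguments. For \eqref{main_31} these are $\alpha/2,\beta/2$; for \eqref{main_32}, $\alpha^3/5,\beta^3/5$; for \eqref{main_33}, $\alpha^r/L_r,\beta^r/L_r$; and for \eqref{main_35}, $\alpha^2/4,\beta^2/4$. The identity \eqref{main_34} is slightly different: after reindexing the two stated sums by $m=2k$ (even $m$) and $m=2k-1$ (odd $m$), and using $L_m/5^{m/2}=(\alpha/\sqrt5)^m+(\beta/\sqrt5)^m$ together with $F_m/5^{(m-1)/2}=(\alpha/\sqrt5)^m-(\beta/\sqrt5)^m$, the two families merge into a single $F(\cdot,2)$-series whose numerator is $(\alpha/\sqrt5)^m+(-\beta/\sqrt5)^m$ for every $m$; so the relevant conjugate arguments are $\alpha/\sqrt5$ and $-\beta/\sqrt5$.

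Next I would combine the dilogarithm terms. In each case the two applications of \eqref{eq.ekwvw5d} or \eqref{eq.bm10lda} contribute $-\tfrac14$ (respectively $-1$) times $\Li_2$ of the two conjugate arguments, and the sum $\Li_2(\cdot)+\Li_2(\cdot)$ is precisely one of \eqref{main_21}, \eqref{main_22}, \eqref{main_23}/\eqref{main_23_1}, \eqref{main_24_1}, \eqref{main_25}. Substituting these closed forms disposes of all transcendental pieces except the irreducible dilogarithm values $\Li_2(-1/4)$ and $\Li_2(1/5)$, which are inherited verbatim from \eqref{main_22} and \eqref{main_25} and therefore survive in \eqref{main_32} and \eqref{main_35}.

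The remaining work, and the main obstacle, is the bookkeeping of the elementary $\arctanh$ contributions. Using $\beta=-1/\alpha$ and $L_r=\alpha^r+\beta^r$ one simplifies the prefactors $\tfrac{1-z^2}{z^2}$ to pure powers of $\alpha$; for instance at $z^2=\alpha^r/L_r$ one gets $\tfrac{1-z^2}{z^2}=\beta^r/\alpha^r=\alpha^{-2r}$ (for $r$ even), which recombines with $z=\sqrt{\alpha^r/L_r}$ into the stated term $-\tfrac12\sqrt{1/(\alpha^{3r}L_r)}\,\arctanh(\sqrt{\alpha^r/L_r})$. Two phenomena must be tracked carefully: when the argument is negative ($\beta/2<0$ in \eqref{main_31}, $\beta^3/5<0$ in \eqref{main_32}) the relation $\arctanh(iy)=i\arctan(y)$ converts the corresponding $\arctanh$ into the $\arctan$ terms appearing there; and in \eqref{main_35}, where both arguments $\alpha^2/4,\beta^2/4$ are positive, the two $\arctanh$ contributions must instead be shown to collapse into the elementary logarithms $\ln\alpha$ and $\ln5$ (the quotient $\tfrac{1+\sqrt z}{1-\sqrt z}$ evaluating to $\sqrt5\,\alpha^3$ at $z=\alpha^2/4$, exactly as the $(z+1/z)\arctanh(z)$ terms simplify in \eqref{eq.pmnct3y}), leaving no inverse-hyperbolic residue. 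Assembling the polylogarithmic and elementary parts, and folding in the additive constants ($+\tfrac12$ per application of \eqref{eq.ekwvw5d}, or $+2$ per application of \eqref{eq.bm10lda}), then yields the five stated closed forms.
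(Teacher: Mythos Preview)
Your proposal is correct and follows essentially the same route as the paper: substitute the conjugate Binet arguments into the $p=2$ case of Theorem~\ref{thm_1} (the paper phrases this as choosing $z=\sqrt{\alpha/2},\,i\sqrt{-\beta/2}$, etc.\ in \eqref{main_1}/\eqref{eq.ekwvw5d}, which is exactly your use of \eqref{eq.ekwvw5d} and its equivalent \eqref{eq.bm10lda}), add the two instances, and collapse the resulting $\Li_2$ pair via the matching identity from Theorem~\ref{thm_2}. Your treatment of the $\arctanh\!\to\!\arctan$ conversion for negative $z^2$ and the logarithmic collapse in \eqref{main_35} matches the simplifications the paper leaves implicit.
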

\begin{proof}
To prove \eqref{main_31} set $z=\sqrt{\alpha/2}$ and $z=i \sqrt{-\beta/2}$ in \eqref{main_1}, 
respectively, combine according to the Binet form, simplify and make use of \eqref{main_21}. 
To prove \eqref{main_32} set $z=\sqrt{\alpha^3/5}$ and $z=i \sqrt{-\beta^3/5}$ in \eqref{main_1}, 
respectively, combine according to the Binet form, simplify and make use of \eqref{main_22}. 
Identity \eqref{main_33} comes from setting $z=\sqrt{\alpha^r/L_r}$ and $z=\sqrt{\beta^r/L_r}$, in turn, 
in \eqref{eq.ekwvw5d} and making use of \eqref{main_23}.
To prove \eqref{main_34} set $z=\sqrt{\alpha/\sqrt{5}}$ and $z=\sqrt{-\beta/\sqrt{5}}$ in \eqref{main_1}, respectively, 
combine according to the Binet form, simplify and make use of \eqref{main_24_1}. 
Finally, proceed as before with $z=\alpha/2$ and $z=\beta/2$ in \eqref{main_1}, respectively. 
When simplifying apply the relation \eqref{main_25}. 
\end{proof}

Using the identities of Hoggatt et al. (Lemma~\ref{lem.ydalnfx}) we will extend the identity~\eqref{main_23}.

\begin{lemma}[Hoggatt et al.~\cite{hoggatt71}]\label{lem.ydalnfx}
For $p$ and $q$ integers,
\begin{gather}
F_{p + q} - F_p \alpha ^q = \beta ^p F_q, \\ 
F_{p + q} - F_p \beta ^q = \alpha ^p F_q, \\
L_{p + q} - L_p \alpha ^q = - \beta ^p F_q \sqrt 5, \\ 
L_{p + q} - L_p \beta ^q = \alpha ^p F_q \sqrt 5. 
\end{gather}
\end{lemma}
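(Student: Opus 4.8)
The plan is to prove all four identities by direct substitution of the Binet forms \eqref{binet} followed by elementary cancellation, exploiting the single structural fact that $\alpha-\beta=\sqrt5$, so that $\alpha^q-\beta^q=\sqrt5\,F_q$ while $\alpha^p+\beta^p=L_p$. Since the Binet representations in \eqref{binet} are valid for all $n\in\mathbb Z$, the arguments go through uniformly for arbitrary integers $p$ and $q$, with no separate treatment of negative subscripts required.

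First I would dispatch the two Fibonacci identities. Writing $F_{p+q}=(\alpha^{p+q}-\beta^{p+q})/\sqrt5$ and $F_p=(\alpha^p-\beta^p)/\sqrt5$, the combination $F_{p+q}-F_p\alpha^q$ loses its $\alpha^{p+q}$ term upon cancellation, leaving $\beta^p(\alpha^q-\beta^q)/\sqrt5=\beta^p F_q$. The second identity $F_{p+q}-F_p\beta^q=\alpha^p F_q$ is obtained by the same computation with the roles of $\alpha$ and $\beta$ interchanged, the surviving factor now being $\alpha^p(\alpha^q-\beta^q)/\sqrt5$.

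The two Lucas identities are handled identically. Substituting $L_{p+q}=\alpha^{p+q}+\beta^{p+q}$ and $L_p=\alpha^p+\beta^p$ into $L_{p+q}-L_p\alpha^q$ and cancelling the common $\alpha^{p+q}$ gives $\beta^{p+q}-\beta^p\alpha^q=-\beta^p(\alpha^q-\beta^q)=-\beta^p F_q\sqrt5$, and swapping $\alpha$ with $\beta$ yields the fourth identity $L_{p+q}-L_p\beta^q=\alpha^p(\alpha^q-\beta^q)=\alpha^p F_q\sqrt5$. There is essentially no obstacle: each identity reduces to a one-line cancellation once the Binet forms are inserted, and the only nontrivial ingredient is the reduction $\alpha^q-\beta^q=\sqrt5\,F_q$, which is itself immediate from \eqref{binet}.
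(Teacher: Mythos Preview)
Your proof is correct: each of the four identities reduces immediately to a one-line cancellation once the Binet forms \eqref{binet} are substituted, and your handling of the signs and the use of $\alpha^q-\beta^q=\sqrt5\,F_q$ are accurate throughout. The paper does not actually prove this lemma at all---it is simply quoted from Hoggatt et al.~\cite{hoggatt71}---so your argument supplies a self-contained verification that the paper omits.
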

\begin{lemma}
For integers $p$ and $q$,
\begin{gather}
\Li_2 \left( {\frac{{F_p }}{{F_{p + q} }}\alpha ^q } \right) + \Li_2 \left( {\frac{{F_q }}{{F_{p + q} }}\beta ^p } \right) 
= \frac{{\pi ^2 }}{6} - \ln \left( {\frac{{F_p }}{{F_{p + q} }}\alpha ^q } \right)\ln \left( {\frac{{F_q }}{{F_{p + q} }}\beta ^p } \right),\quad p + q\ne 0,\label{eq.mddhecj}\\
\Li_2 \left( {\frac{{F_q }}{{F_{p + q} }}\alpha ^p } \right) + \Li_2 \left( {\frac{{F_p }}{{F_{p + q} }}\beta ^q } \right) 
= \frac{{\pi ^2 }}{6} - \ln \left( {\frac{{F_q }}{{F_{p + q} }}\alpha ^p } \right)\ln \left( {\frac{{F_p }}{{F_{p + q} }}\beta ^q } \right),\quad p + q\ne 0,\\
\Li_2 \left( {\frac{{L_p }}{{L_{p + q} }}\alpha ^q } \right) + \Li_2 \left( {\frac{{ - F_q \sqrt 5 }}{{L_{p + q} }}\beta ^p } \right) 
= \frac{{\pi ^2 }}{6} - \ln \left( {\frac{{L_p }}{{L_{p + q} }}\alpha ^q } \right)\ln \left( {\frac{{ - F_q \sqrt 5 }}{{L_{p + q} }}\beta ^p } \right),\\
\Li_2 \left( {\frac{{L_p }}{{L_{p + q} }}\beta ^q } \right) + \Li_2 \left( {\frac{{F_q \sqrt 5 }}{{L_{p + q} }}\alpha ^p } \right) 
= \frac{{\pi ^2 }}{6} - \ln \left( {\frac{{L_p }}{{L_{p + q} }}\beta ^q } \right)\ln \left( {\frac{{F_q \sqrt 5 }}{{L_{p + q} }}\alpha ^p } \right)\label{eq.xmhoye7}.
\end{gather}
\end{lemma}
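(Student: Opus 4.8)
The plan is to recognize that each of the four identities is a single instance of the dilogarithm reflection formula \eqref{dilog_id2}, namely $\Li_2(x) + \Li_2(1-x) = \pi^2/6 - \ln(x)\ln(1-x)$. For each line of the lemma I would set $x$ equal to the argument of the first dilogarithm and then verify, using the matching relation of Hoggatt et al.\ in Lemma~\ref{lem.ydalnfx}, that $1-x$ coincides exactly with the argument of the second dilogarithm. Once that is in place, the reflection formula delivers the stated right-hand side verbatim, logarithmic product term included, because $\ln(1-x)$ becomes precisely the logarithm of the second argument.

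To carry this out for \eqref{eq.mddhecj}, I would put $x = (F_p/F_{p+q})\alpha^q$, so that $1-x = (F_{p+q} - F_p\alpha^q)/F_{p+q}$; the first Hoggatt identity $F_{p+q} - F_p\alpha^q = \beta^p F_q$ then gives $1-x = (F_q/F_{p+q})\beta^p$, which is the second argument. The remaining three identities follow in exactly the same manner, one from each of the remaining Hoggatt relations: the second from $F_{p+q} - F_p\beta^q = \alpha^p F_q$, the third from $L_{p+q} - L_p\alpha^q = -\beta^p F_q\sqrt 5$, and the fourth from $L_{p+q} - L_p\beta^q = \alpha^p F_q\sqrt 5$. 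Thus the four dilogarithm identities stand in one-to-one correspondence with the four relations of Lemma~\ref{lem.ydalnfx}, each obtained by a single application of \eqref{dilog_id2}. The hypothesis $p+q\neq 0$ in the first two cases is exactly what guarantees $F_{p+q}\neq 0$, so that the arguments are well defined.

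There is no serious obstacle: the proof reduces to the purely algebraic check that the two arguments sum to $1$, which is handed to us directly by the Hoggatt relations. The only point worth a moment's care is to ensure that the arguments lie in a range where the real form \eqref{dilog_id2} applies and the logarithms are correctly interpreted; but since in each case the two arguments are genuine complements summing to $1$, the standard reflection identity is available and no branch subtleties intervene.
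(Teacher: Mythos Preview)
Your proposal is correct and follows exactly the paper's approach: the paper's proof consists solely of the instruction to use the reflection formula \eqref{dilog_id2}, and you have correctly supplied the details, pairing each of the four identities with the corresponding Hoggatt relation from Lemma~\ref{lem.ydalnfx} to verify that the two dilogarithm arguments sum to $1$.
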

\begin{proof}
Use the dilogarithm reflection formula \eqref{dilog_id2}.
\end{proof}

As usual, identities~\eqref{eq.mddhecj}--\eqref{eq.xmhoye7} can be stated as Fibonacci and Lucas series, 
namely, that if $p$ and $q$ are positive integers, then
\begin{equation}\label{Fib_Luc_id1}
\begin{split}
\sum_{k = 1}^\infty  {\frac{{F_p^k L_{qk}  + F_q^k L_{pk} }}{{F_{p + q}^k k^2 }}} & = \frac{{\pi ^2 }}{3} - \ln \left( {\frac{{F_p }}{{F_{p + q} }}\alpha ^q } \right)\ln \left( {\frac{{F_q }}{{F_{p + q} }}\beta ^p } \right)\\
&\qquad - \ln \left( {\frac{{F_q }}{{F_{p + q} }}\alpha ^p } \right)\ln \left( {\frac{{F_p }}{{F_{p + q} }}\beta ^q } \right),\quad\mbox{$p$ even, $q$ even},
\end{split}
\end{equation}
and
\begin{equation}\label{Fib_Luc_id2}
\begin{split}
&\sum_{k = 1}^\infty  {\frac{{L_p^k L_{kq} }}{{L_{p + q}^k k^2 }}} + \sum_{k = 1}^\infty  {\frac{{F_q^{2k} 5^k L_{2kp} }}{{L_{p + q}^{2k} (2k)^2 }}} + \sum_{k = 1}^\infty  {\frac{{F_q^{2k - 1} 5^k F_{(2k - 1)p} }}{{L_{p + q}^{2k - 1} (2k - 1)^2 }}} \\
&\qquad = \frac{{\pi ^2 }}{3} - \ln \left( {\frac{{L_p }}{{L_{p + q} }}\alpha ^q } \right)\ln \left( {\frac{{ - F_q \sqrt 5 }}{{L_{p + q} }}\beta ^p } \right) - \ln \left( {\frac{{L_p }}{{L_{p + q} }}\beta ^q } \right)\ln \left( {\frac{{F_q \sqrt 5 }}{{L_{p + q} }}\alpha ^p } \right),\quad\mbox{$p$ odd, $q$ even}.
\end{split}
\end{equation}

When $p=q=2$ then \eqref{Fib_Luc_id1} gives
\begin{equation*}
\sum_{k=1}^\infty \frac{L_{2k}}{3^k k^2} = \frac{\pi^2}{6} + 4 \ln^2 (\alpha) - \ln^2 (3),
\end{equation*}
which confirms \eqref{special_Luc} with $r=2$. When $p=1$ and $q=2$ then \eqref{Fib_Luc_id2} produces
\begin{equation*}
\begin{split}
&\sum_{k=1}^\infty \frac{L_{2k}}{4^k k^2} + \sum_{k=1}^\infty \frac{5^k L_{2k}}{4^{2k} (2k)^2} 
+ \sum_{k=1}^\infty \frac{5^k F_{2k - 1}}{4^{2k - 1} (2k - 1)^2} \\
&\qquad\qquad = \frac{{\pi^2}}{3} + 4 \ln^2 (\alpha) + 2\ln(2)\ln(5) - 8 \ln^2(2).
\end{split}
\end{equation*}

Identities \eqref{Fib_Luc_id1} and \eqref{Fib_Luc_id2} also lead to Ramanujan type sums as presented in Theorem \ref{thm_3}.
We state the result corresponding to \eqref{Fib_Luc_id1} in the next theorem and leave the other sum as an exercise.

\begin{theorem}\label{thm_4}
Let $p$ and $q$ be even integers. Then  
\begin{eqnarray}\label{Ram_61}
&& \sum_{k=1}^\infty \frac{F_p^k L_{qk}+F_q^k L_{pk}}{F_{p+q}^k (2k-1) (2k)^2 (2k+1)} = 2 - \frac{\pi^2}{12} \nonumber \\
&& \quad - \frac{1}{4} \ln \left( \frac{{F_p }}{{F_{p + q} }}\alpha ^q \right) \ln \left( \frac{{F_q }}{{F_{p + q} }}\beta ^p \right )  
- \frac{1}{4} \ln \left( \frac{{F_q }}{{F_{p + q} }}\alpha ^p \right) \ln \left( \frac{{F_p }}{{F_{p + q} }}\beta ^q \right ) \nonumber \\
&& \quad - \frac{1}{2} \frac{F_q \beta^p}{\sqrt{F_{p+q} F_p \alpha^q}} \arctanh \Big (\sqrt{\frac{F_p \alpha^q}{F_{p+q}}} \Big )
- \frac{1}{2} \frac{F_p \alpha^q}{\sqrt{F_{p+q} F_q \beta^p}} \arctanh \Big (\sqrt{\frac{F_q \beta^p}{F_{p+q}}} \Big ) \nonumber \\
&& \quad - \frac{1}{2} \frac{F_p \beta^q}{\sqrt{F_{p+q} F_q \alpha^p}} \arctanh \Big (\sqrt{\frac{F_q \alpha^p}{F_{p+q}}} \Big )
- \frac{1}{2} \frac{F_q \alpha^p}{\sqrt{F_{p+q} F_p \beta^q}} \arctanh \Big (\sqrt{\frac{F_p \beta^q}{F_{p+q}}} \Big ).
\end{eqnarray}
\end{theorem}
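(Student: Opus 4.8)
The plan is to evaluate $F(z,2)$ from~\eqref{eq.ekwvw5d} at four carefully chosen arguments and add the results. Precisely, I would take $z^2$ equal in turn to the four dilogarithm arguments occurring in the first two identities of the preceding lemma, namely
\begin{equation*}
\frac{F_p}{F_{p+q}}\alpha^q,\qquad \frac{F_q}{F_{p+q}}\beta^p,\qquad \frac{F_q}{F_{p+q}}\alpha^p,\qquad \frac{F_p}{F_{p+q}}\beta^q.
\end{equation*}
Since $p$ and $q$ are even and positive, each of $\alpha^q,\beta^p,\alpha^p,\beta^q$ is positive, and Lemma~\ref{lem.ydalnfx} shows that each of these four numbers lies in $(0,1)$: it is positive, and its complement to $1$ is again a positive Hoggatt ratio (for instance $1-\frac{F_p}{F_{p+q}}\alpha^q=\frac{F_q}{F_{p+q}}\beta^p$). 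Hence~\eqref{eq.ekwvw5d} applies with a real $\arctanh$ throughout and no $\arctan$ terms intrude, which is exactly what the even parity buys us.

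On summing the four specializations the left-hand sides combine through the Binet form $L_n=\alpha^n+\beta^n$: the two choices carrying the factor $F_p^k/F_{p+q}^k$ give $F_p^k(\alpha^{qk}+\beta^{qk})/F_{p+q}^k=F_p^kL_{qk}/F_{p+q}^k$, while the two carrying $F_q^k/F_{p+q}^k$ give $F_q^kL_{pk}/F_{p+q}^k$, reproducing the series on the left of~\eqref{Ram_61}. On the right, the four additive constants $\tfrac12$ sum to $2$, and the four dilogarithms split into the two reflection pairs $\{x_1,1-x_1\}$ and $\{x_3,1-x_3\}$ governed by~\eqref{eq.mddhecj} and its companion identity; each pair evaluates to $\tfrac{\pi^2}{6}$ less a product of logarithms, so the overall prefactor $-\tfrac14$ produces the term $-\tfrac{\pi^2}{12}$ together with two logarithmic cross terms in the arguments $\tfrac{F_p}{F_{p+q}}\alpha^q$, $\tfrac{F_q}{F_{p+q}}\beta^p$, and their companions.

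It remains to handle the four $\arctanh$ contributions, which is the only genuinely delicate part. For a single argument $z^2=x$ the relevant piece of~\eqref{eq.ekwvw5d} is $\tfrac12\bigl(z-\tfrac{1}{z}\bigr)\arctanh(z)=\tfrac{z^2-1}{2z}\arctanh(\sqrt{x})$, and here $z^2-1=-(1-x)$ is exactly the complementary Hoggatt ratio. Thus for $x=\frac{F_p}{F_{p+q}}\alpha^q$ one finds $\tfrac{z^2-1}{z}=-\frac{F_q\beta^p}{\sqrt{F_{p+q}F_p\alpha^q}}$, which is precisely the coefficient of the first $\arctanh$ in~\eqref{Ram_61}; the remaining three identities of Lemma~\ref{lem.ydalnfx} dispatch the other three arguments in the same way. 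The main obstacle is therefore purely bookkeeping: keeping the four pairings $(x,1-x)$ aligned and tracking the signs so that each logarithmic product and each radicand $\sqrt{F_{p+q}F_p\alpha^q}$, $\sqrt{F_{p+q}F_q\beta^p}$, and so on, lands with the correct numerator and argument. Assembling the constant $2$, the two dilogarithm pairs, and the four $\arctanh$ terms then yields~\eqref{Ram_61}.
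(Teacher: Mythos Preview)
Your proposal is correct and follows precisely the route the paper intends: the paper does not spell out a proof of Theorem~\ref{thm_4} but introduces it as the analogue of~\eqref{main_33} built from the lemma identities~\eqref{eq.mddhecj} and its companion, and your evaluation of~\eqref{eq.ekwvw5d} at the four arguments $z^2\in\{\tfrac{F_p}{F_{p+q}}\alpha^q,\tfrac{F_q}{F_{p+q}}\beta^p,\tfrac{F_q}{F_{p+q}}\alpha^p,\tfrac{F_p}{F_{p+q}}\beta^q\}$, combined via the Binet form and the reflection pairing, is exactly that. Your use of Lemma~\ref{lem.ydalnfx} to identify $1-z^2$ with the complementary Hoggatt ratio, which simultaneously produces the $\arctanh$ coefficients and pairs the dilogarithms, is the key observation the paper has in mind.
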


\section{Additional series} 

In the next couple of theorems we state identities involving binomial coefficients.

\begin{lemma}
Let $m\in\mathbb Z^+$, $m\ge 2$. Let $z$ be any real or complex variable such that $|z|<1$. Then
\begin{equation}\label{eq.qigbmpa}
\begin{split}
\sum_{k = 1}^\infty {\frac{{\binom{2k}mz^{2k} }}{{(2k - 1)(2k)(2k + 1)}}} &= \frac{1}{4z}\sum_{j = 0}^{m - 1} {\frac{1}{{m - j}}\left( {( - 1)^{m - 1} \left( {\frac{z}{{1 + z}}} \right)^{m - j}  + ( - 1)^j \left( {\frac{z}{{1 - z}}} \right)^{m - j} } \right)}\\ 
&\qquad + \frac{1}{2m}\left( {( - 1)^{m - 1} \left( {1 + \frac{z}{2}} \right)\left( {\frac{z}{{1 + z}}} \right)^m  - \left( {1 - \frac{z}{2}} \right)\left( {\frac{z}{{1 - z}}} \right)^m } \right)\\
&\qquad\; + \frac{1}{4(m - 1)}\left( {( - 1)^m (1 + z)\left( {\frac{z}{{1 + z}}} \right)^m  + (1 - z)\left( {\frac{z}{{1 - z}}} \right)^m } \right)\\
&\qquad\;\; + \frac{{( - 1)^m }}{4z}\ln \left( {\frac{{1 + z}}{{1 - z}}} \right).
\end{split}
\end{equation}
\end{lemma}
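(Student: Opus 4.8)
The plan is to strip the binomial coefficient against the cubic denominator by partial fractions, and then to recognise the three remaining series as integral transforms of a single elementary generating function. Writing
\[
\frac{1}{(2k-1)(2k)(2k+1)} = \frac{1}{2(2k-1)} - \frac{1}{2k} + \frac{1}{2(2k+1)},
\]
the target sum splits as $\tfrac12 S_- - S_0 + \tfrac12 S_+$, where $S_\pm = \sum_{k\ge1}\binom{2k}{m}z^{2k}/(2k\pm1)$ and $S_0 = \sum_{k\ge1}\binom{2k}{m}z^{2k}/(2k)$. Using $z^{2k}/(2k-1) = z\int_0^z t^{2k-2}\,dt$, $z^{2k}/(2k) = \int_0^z t^{2k-1}\,dt$, and $z^{2k}/(2k+1) = z^{-1}\int_0^z t^{2k}\,dt$, and interchanging sum and integral (legitimate since $|z|<1$), each series becomes an integral of $g(t):=\sum_{k\ge1}\binom{2k}{m}t^{2k}$. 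Concretely, the identity to be proved reduces to
\[
S = \frac{z}{2}\int_0^z \frac{g(t)}{t^2}\,dt - \int_0^z \frac{g(t)}{t}\,dt + \frac{1}{2z}\int_0^z g(t)\,dt.
\]

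Next I would put $g$ into closed form. Since $\sum_{n\ge0}\binom{n}{m}w^n = w^m/(1-w)^{m+1}$ for $|w|<1$, extracting the even part (and using $\binom{0}{m}=0$ for $m\ge 1$) gives
\[
g(t) = \frac12\left(\frac{t^m}{(1-t)^{m+1}} + \frac{(-t)^m}{(1+t)^{m+1}}\right).
\]
Substituting this into the three integrals and cancelling the explicit powers of $t$ leaves integrals of the shape $\int_0^z t^{a}(1\mp t)^{-(m+1)}\,dt$ with $a\in\{m-2,m-1,m\}$; the hypothesis $m\ge2$ guarantees $a\ge0$, so there is no singularity at the origin and the term-by-term integration above is valid.

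The evaluation of these integrals is the technical heart. For the $(1-t)$ contributions I would substitute $u=t/(1-t)$, under which $t^{a}(1-t)^{-(m+1)}\,dt = u^{a}(1+u)^{m-1-a}\,du$; the cases $a=m-2$ and $a=m-1$ integrate to pure powers $\big(t/(1-t)\big)^\ell$, while the case $a=m$ forces a polynomial division of $u^m/(1+u)$ and produces the single logarithmic term together with further powers, the relation $1+u = (1-t)^{-1}$ turning $\ln(1+u)$ into $-\ln(1-t)$. The $(1+t)$ contributions are handled symmetrically via $v=t/(1+t)$, giving powers $\big(t/(1+t)\big)^\ell$ and $\ln(1+t)$. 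Assembling the two logarithms yields exactly $\tfrac{(-1)^m}{4z}\ln\!\big((1+z)/(1-z)\big)$, a reassuring consistency check; a similar check on the highest power $\big(z/(1-z)\big)^m$ confirms that the three weights $z/2$, $-1$, $1/(2z)$ applied to the three cases $a=m-2,m-1,m$ combine into the $\tfrac{1}{2m}$ boundary term.

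The main obstacle will be organisational rather than conceptual: one must collect, for each $\ell=1,\dots,m$, the coefficient of $\big(z/(1-z)\big)^\ell$ and of $\big(z/(1+z)\big)^\ell$ coming from the three weights applied to the three cases, and verify that these repackage into the single finite sum $\tfrac{1}{4z}\sum_{j=0}^{m-1}(m-j)^{-1}(\cdots)$ together with the isolated $\tfrac{1}{2m}$ and $\tfrac{1}{4(m-1)}$ terms (the latter two collecting the top powers $\ell=m$ and $\ell=m-1$, the appearance of the factors $1\pm z/2$ and $1\pm z$ reflecting the combination of the $z/2$ and $1/(2z)$ prefactors with those powers). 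Careful re-indexing $\ell=m-j$ and tracking of the signs $(-1)^{m-1}$, $(-1)^m$, $(-1)^j$ is where the labour lies, but no idea beyond the two substitutions above is required.
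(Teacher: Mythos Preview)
Your approach is correct and genuinely different from the paper's. The paper proves this lemma in one line: differentiate the closed form of $F(z,1)$ in~\eqref{eq.ajavxbw} $m$ times with respect to $z$ (and then multiply by $z^m/m!$), so that the factor $\binom{2k}{m}$ is produced on the series side and the right-hand side is obtained by repeatedly applying Leibniz to $\tfrac12\ln(1-z^2)+\tfrac12(z+1/z)\arctanh(z)$. You instead go in the opposite direction: you start from the closed form of $\sum_{n}\binom{n}{m}w^n$, take its even part to get $g(t)$, and then \emph{integrate} against the partial-fraction weights to manufacture the denominator $(2k-1)(2k)(2k+1)$. Both routes end with the same bookkeeping task---collecting coefficients of $(z/(1\pm z))^\ell$ and a single logarithm---so neither is dramatically shorter. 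The paper's method has the advantage of reusing the identity~\eqref{eq.ajavxbw} already proved, making the lemma an immediate corollary of Theorem~\ref{thm_1}; your method is self-contained and makes the structure of the answer (why the substitutions $u=t/(1-t)$ and $v=t/(1+t)$ appear, and why $m\ge 2$ is needed for integrability at the origin) more transparent.
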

\begin{proof}
Differentiate~\eqref{eq.ajavxbw} with respect to $z$, $m$ times.
\end{proof}

\begin{theorem}
If $m$ is a positive integer greater than or equal to $2$, then
\begin{equation}\label{eq.myu60u9}
\begin{split}
\sum_{k = 1}^\infty  {\frac{\binom{2k}m}{{5^k (2k - 1)(2k)(2k + 1)}}}  &= \frac{5}{4}\sum_{j = 0}^{\left\lfloor {(m - 1)/2} \right\rfloor } {\frac{{F_{m - 2j} }}{{m - 2j}}\,\frac{1}{{2^{m - 2j} }}}  - \frac{5}{4}\sum_{j = 1}^{\left\lceil {(m - 1)/2} \right\rceil } {\frac{{F_{m - 2j + 1} }}{{m - 2j + 1}}\,\frac{1}{{2^{m - 2j + 1} }}}\\ 
&\qquad - \frac{{L_m }}{m}\,\frac{1}{{2^{m + 1} }} + \frac{{F_m }}{m}\,\frac{1}{{2^{m + 2} }} + \frac{{F_{m - 1} }}{{m - 1}}\,\frac{1}{{2^{m + 1} }} + \frac{{( - 1)^m }}{2}\sqrt 5\, \ln (\alpha) ,
\end{split}
\end{equation}
\begin{equation}\label{eq.qat8uyi}
\begin{split}
\sum_{k = 1}^\infty {\frac{{4^k \binom{2k}m}}{{5^k (2k - 1)(2k)(2k + 1)}}}  &= \frac{5}{8}\sum_{j = 0}^{\left\lfloor {(m - 1)/2} \right\rfloor } {\frac{{F_{3(m - 2j)} }}{{m - 2j}}2^{m - 2j} } - \frac{5}{8}\sum_{j = 1}^{\left\lceil {(m - 1)/2} \right\rceil } {\frac{{F_{3(m - 2j + 1)} }}{{m - 2j + 1}}2^{m - 2j + 1} }\\ 
&\qquad - \frac{{L_{3m} }}{m}2^{m - 1}  + \frac{{F_{3m} }}{m}2^{m - 1} + \frac{{F_{3(m - 1)} }}{{m - 1}}2^{m - 2} + \frac{{( - 1)^m 3}}{4}\sqrt 5 \ln (\alpha), 
\end{split}
\end{equation}
and
\begin{equation}\label{eq_Fib_bin3}
\begin{split}
\sum_{k = 1}^\infty \frac{5^k \binom {2k}{m}}{9^k (2k - 1)(2k)(2k + 1)}  &= \frac{3}{4\sqrt{5}} \sum_{j = 0}^{m - 1} \frac{A(m,j)}{m-j} \\ 
&\qquad + \frac{1}{m 2^{m+1}}
\begin{cases}
5^{m/2} \Big ( -\frac{5}{2}  L_{2m} + L_{2m-1}\Big ), & \text{$m$ even;} \\ 
5^{(m+1)/2} \Big ( -\frac{5}{2}  F_{2m} + F_{2m-1} \Big ), & \text{$m$ odd;} 
\end{cases} \\
&\qquad + \frac{1}{3(m-1) 2^{m+1}}
\begin{cases}
5^{m/2} L_{2m-2}, & \text{$m$ even;} \\ 
5^{(m+1)/2} F_{2m-2}, & \text{$m$ odd;} 
\end{cases}
\end{split}
\end{equation}
with
\begin{equation*}
A(m,j) = 
\begin{cases}
- \frac{5^{(m-j)/2}}{2^{m-j}} L_{2(m-j)}, & \text{$m$ even, $j$ odd;} \\ 
\frac{5^{(m-j)/2}}{2^{m-j}} L_{2(m-j)}, & \text{$m$ odd, $j$ even;} \\
\frac{5^{(m-j+1)/2}}{2^{m-j}} F_{2(m-j)}, & \text{$m$ even, $j$ even;} \\
- \frac{5^{(m-j+1)/2}}{2^{m-j}} F_{2(m-j)}, & \text{$m$ odd, $j$ odd.}
\end{cases}
\end{equation*}
\end{theorem}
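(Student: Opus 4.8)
The plan is to apply the preceding Lemma, that is, the closed form \eqref{eq.qigbmpa} for the binomially weighted series $\sum_{k\ge 1}\binom{2k}{m}z^{2k}/\bigl((2k-1)(2k)(2k+1)\bigr)$, at the three arguments $z=1/\sqrt5$, $z=2/\sqrt5$ and $z=\sqrt5/3$ --- exactly the values already exploited in Corollary~\ref{cor.duqhfnh} --- and then to collapse each resulting expression into Fibonacci and Lucas numbers by way of the Binet forms \eqref{binet}.

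First I would record, for each substitution, the three building blocks occurring in \eqref{eq.qigbmpa}: the ratios $z/(1+z)$ and $z/(1-z)$ and the logarithm $\tfrac{1}{4z}\ln\frac{1+z}{1-z}=\tfrac{1}{2z}\arctanh(z)$. Using $1/\alpha=-\beta$, $\alpha\beta=-1$ and $1/\alpha^{2n}=\beta^{2n}$, one finds $z/(1+z)=-\beta/2$, $z/(1-z)=\alpha/2$, $\arctanh(1/\sqrt5)=\ln\alpha$ in the first case; $z/(1+z)=-2\beta^3$, $z/(1-z)=2\alpha^3$, $\arctanh(2/\sqrt5)=3\ln\alpha$ in the second; and $z/(1+z)=\tfrac{\sqrt5}{2}\beta^2$, $z/(1-z)=\tfrac{\sqrt5}{2}\alpha^2$, $\arctanh(\sqrt5/3)=2\ln\alpha$ in the third. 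The logarithmic terms then evaluate immediately, producing the $\ln(\alpha)$ contributions in \eqref{eq.myu60u9} and \eqref{eq.qat8uyi}.

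Next, in the principal sum $\tfrac{1}{4z}\sum_{j=0}^{m-1}\tfrac{1}{m-j}\bigl((-1)^{m-1}(\tfrac{z}{1+z})^{m-j}+(-1)^{j}(\tfrac{z}{1-z})^{m-j}\bigr)$ I would insert the powers of $\alpha$ and $\beta$ just found and factor out the attendant powers of $2$ and $5$. For $z=1/\sqrt5$ and $z=2/\sqrt5$ the sign $(-1)^{m-1}$ combines with the $(-1)^{m-j}$ arising from $(-\beta)^{m-j}$ to give $(-1)^{j+1}$, so the bracket reduces to $(-1)^{j}(\alpha^{c(m-j)}-\beta^{c(m-j)})=(-1)^{j}\sqrt5\,F_{c(m-j)}$ with $c=1,3$; the alternating factor $(-1)^{j}$ then splits the $j$-sum into its even- and odd-index parts, which is precisely the floor/ceiling separation displayed in \eqref{eq.myu60u9} and \eqref{eq.qat8uyi}. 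For $z=\sqrt5/3$ the relevant powers are $\alpha^{2(m-j)}$ and $\beta^{2(m-j)}$, and now the joint parity of $m$ and $j$ decides whether the bracket collapses to $L_{2(m-j)}$ or to $\sqrt5\,F_{2(m-j)}$; this fourfold distinction is exactly the content of the piecewise function $A(m,j)$ in \eqref{eq_Fib_bin3}. The two finite correction groups $\tfrac{1}{2m}(\cdots)$ and $\tfrac{1}{4(m-1)}(\cdots)$ of \eqref{eq.qigbmpa} are treated in the same fashion; they yield the isolated $L_m$, $F_m$ and $F_{m-1}$ terms after using $\alpha^m+\beta^m=L_m$ and $\alpha^m-\beta^m=\sqrt5\,F_m$.

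I expect the only genuine obstacle to be the sign-and-parity bookkeeping: one must keep careful track of how the factors $(-1)^{m-1}$, $(-1)^{j}$ and the $(-1)^{m-j}$ coming from $(-\beta)^{m-j}$ interact, and then match the resulting even/odd index separation against the stated summation limits and the four branches of $A(m,j)$. Once this is done correctly, the remaining simplifications --- extracting powers of $2$ and $5$ and reducing reciprocal powers of $\alpha$ via $1/\alpha^{2n}=\beta^{2n}$ --- are entirely routine.
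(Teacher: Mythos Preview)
Your approach is exactly the paper's: substitute $z=1/\sqrt5$, $z=2/\sqrt5$ and $z=\sqrt5/3$ into the lemma \eqref{eq.qigbmpa} and reduce via the Binet formulas, and your auxiliary computations of $z/(1\pm z)$ and $\arctanh(z)$ at these points are all correct. The paper's own proof is the one-line instruction to perform precisely these three substitutions, so your write-up is simply a fleshed-out version of the same argument; the parity/sign bookkeeping you flag is indeed the only nontrivial step.
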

\begin{proof}
Setting $z=1/\sqrt 5$ in \eqref{eq.qigbmpa} gives \eqref{eq.myu60u9} while $z=2/\sqrt 5$ gives \eqref{eq.qat8uyi} and 
$z=\sqrt 5/3$ gives \eqref{eq_Fib_bin3}.
\end{proof}

\begin{theorem}
If $n$ is a positive even integer and $m$ is a positive integer, then
\begin{equation}
\begin{split}
&\sum_{k = 1}^\infty  {\frac{{F_n^{2k} 5^k \binom{2k}{2m}}}{{L_n^{2k} (2k - 1)(2k)(2k + 1)}}} \\
&\qquad = \frac{{L_n }}{{4F_n }}\sum_{j = 0}^{m - 1} {\frac{{5^{m - j} }}{{2m - 2j}}\frac{{F_n^{2m - 2j} }}{{2^{2m - 2j} }}F_{n(2m - 2j)} }  - \frac{{L_n }}{{4F_n }}\sum_{j = 1}^m {\frac{{5^{m - j} }}{{2m - 2j + 1}}\frac{{F_n^{2m - 2j + 1} }}{{2^{2m - 2j + 1} }}L_{n(2m - 2j + 1)} }\\ 
&\qquad\; - \frac{{5^m }}{m}\frac{{F_n^{2m} }}{{2^{2m + 2} }}L_{2nm} + \frac{1}{{L_n }}\frac{{5^{m + 1} }}{m}\frac{{F_n^{2m + 1} }}{{2^{2m + 3} }}F_{2nm} + \frac{1}{{L_n }}\frac{{5^m }}{{(2m - 1)}}\frac{{F_n^{2m} }}{{2^{2m + 1} }}L_{n(2m - 1)}  + \frac{{nL_n }}{{2F_n \sqrt 5 }}\ln (\alpha) ,
\end{split}
\end{equation}
\begin{equation}
\begin{split}
&\sum_{k = 1}^\infty  {\frac{{F_n^{2k} 5^k \binom{2k}{2m + 1}}}{{L_n^{2k} (2k - 1)(2k)(2k + 1)}}} \\
&\qquad = \frac{{L_n }}{{4F_n }}\sum_{j = 0}^m {\frac{{5^{m - j} }}{{2m - 2j + 1}}\frac{{F_n^{2m - 2j + 1} }}{{2^{2m - 2j + 1} }}L_{n(2m - 2j + 1)} }  - \frac{{L_n }}{{4F_n }}\sum_{j = 1}^m {\frac{{5^{m - j + 1} }}{{2m - 2j + 2}}\frac{{F_n^{2m - 2j + 2} }}{{2^{2m - 2j + 2} }}F_{n(2m - 2j + 2)} } \\
&\qquad\; - \frac{{5^{m + 1} }}{{(2m + 1)}}\frac{{F_n^{2m + 1} }}{{2^{2m + 2} }}F_{n(2m + 1)}  + \frac{1}{{L_n }}\frac{{5^{m + 1} }}{{(2m + 1)}}\frac{{F_n^{2m + 2} }}{{2^{2m + 3} }}L_{n(2m + 1)} \\
&\qquad\;\; + \frac{1}{{L_n }}\frac{{5^{m + 1} }}{m}\frac{{F_n^{2m + 1} }}{{2^{2m + 3} }}F_{2mn}  - \frac{{nL_n }}{{2F_n \sqrt 5 }}\ln (\alpha).
\end{split}
\end{equation}
\end{theorem}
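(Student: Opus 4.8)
The plan is to invoke the Lemma producing~\eqref{eq.qigbmpa} with the single substitution $z = F_n\sqrt 5/L_n$, taking $m\mapsto 2m$ for the first identity and $m\mapsto 2m+1$ for the second. By the Binet forms~\eqref{binet} one has $L_n+F_n\sqrt5 = 2\alpha^n$ and $L_n-F_n\sqrt5 = 2\beta^n$, so this choice gives $z^{2k} = 5^kF_n^{2k}/L_n^{2k}$ on the left, matching the stated series, together with the clean reductions
\begin{equation*}
1+z = \frac{2\alpha^n}{L_n},\quad 1-z = \frac{2\beta^n}{L_n},\quad \frac{z}{1+z} = \frac{F_n\sqrt5}{2\alpha^n},\quad \frac{z}{1-z} = \frac{F_n\sqrt5}{2\beta^n},\quad \frac{1}{4z} = \frac{L_n}{4F_n\sqrt5}.
\end{equation*}
Since $n$ is even and $\alpha\beta = -1$, we have $\alpha^{-n\ell} = \beta^{n\ell}$ and $\beta^{-n\ell} = \alpha^{n\ell}$ for every integer $\ell$, whence each power collapses to $(z/(1+z))^\ell = \frac{F_n^\ell 5^{\ell/2}}{2^\ell}\beta^{n\ell}$ and $(z/(1-z))^\ell = \frac{F_n^\ell 5^{\ell/2}}{2^\ell}\alpha^{n\ell}$.

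Next I would treat the four blocks of~\eqref{eq.qigbmpa} in turn. In the main sum $\frac{1}{4z}\sum_j\frac{1}{m-j}(\cdots)$ I write $r$ for the running exponent $m-j$; inserting the collapsed powers, the bracket becomes $\frac{F_n^r5^{r/2}}{2^r}\bigl(-\beta^{nr}+(-1)^r\alpha^{nr}\bigr)$ in the case $m\mapsto 2m$. This equals $\frac{F_n^r5^{r/2}}{2^r}F_{nr}\sqrt5$ when $r$ is even and $-\frac{F_n^r5^{r/2}}{2^r}L_{nr}$ when $r$ is odd, by $\alpha^{nr}-\beta^{nr}=F_{nr}\sqrt5$ and $\alpha^{nr}+\beta^{nr}=L_{nr}$. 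Re-indexing the even part by $r = 2m-2j$ ($0\le j\le m-1$) and the odd part by $r = 2m-2j+1$ ($1\le j\le m$), and cancelling the lone $\sqrt5$ against one factor of $5^{1/2}$, reproduces precisely the two displayed Fibonacci and Lucas sums with their signs.

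Finally I would dispatch the remaining blocks. The logarithmic term uses $\frac{1+z}{1-z} = (\alpha/\beta)^n = \alpha^{2n}$ (valid because $n$ is even), so $\frac{(-1)^m}{4z}\ln\frac{1+z}{1-z}$ gives $\frac{nL_n}{2F_n\sqrt5}\ln\alpha$, with sign $+$ for $m\mapsto 2m$ and $-$ for $m\mapsto 2m+1$. The $(1\pm z/2)$ block, after inserting the collapsed $2m$-th powers, reduces to a multiple of $L_{2nm}-\frac{z}{2}F_{2nm}\sqrt5$; substituting $z = F_n\sqrt5/L_n$ then splits it into the stated $L_{2nm}$ and $F_{2nm}$ terms. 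The $(1\pm z)$ block collapses via $\alpha^n\beta^{2nm}+\beta^n\alpha^{2nm} = (\alpha\beta)^nL_{n(2m-1)} = L_{n(2m-1)}$ to the single $L_{n(2m-1)}$ term. The odd case runs identically with every parity reversed. The only real obstacle is bookkeeping: one must check that the sign pattern $(-1)^{m-1},(-1)^j$ together with the relation $\alpha^{-n\ell}=\beta^{n\ell}$ consistently routes even $r$ to $+F_{nr}$ and odd $r$ to $-L_{nr}$, and that the two re-indexings match the summation limits exactly; there is no analytic difficulty beyond this careful tracking of parities and shifts.
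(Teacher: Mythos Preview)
Your proposal is correct and follows exactly the approach of the paper: set $z=F_n\sqrt5/L_n$ in~\eqref{eq.qigbmpa}, separate according to the parity of the exponent (equivalently, take $m\mapsto 2m$ or $m\mapsto 2m+1$), and simplify via the Binet formulas together with $(\alpha\beta)^n=1$ for even $n$. Your detailed bookkeeping of the four blocks is accurate and simply fleshes out what the paper states in one line.
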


\begin{proof}
Set $z=F_n\sqrt 5/L_n$ in~\eqref{eq.qigbmpa}, consider the parity of $m$ and use the Binet formulas.
\end{proof}

\begin{lemma}
Let $m\in\mathbb Z^+$, $m\ge 2$. Let $z$ be any real or complex variable such that $|z|<1$. Then
\begin{equation}\label{eq.f7siyvk}
\begin{split}
\sum_{k = 1}^\infty  {\frac{{\binom{2k}mz^{2k} }}{{(2k - 1)k^2 (2k + 1)}}}  &= 2\frac{{( - 1)^{m - 1} }}{m}\ln (1 - z^2 ) + \frac{{( - 1)^m }}{z}\ln \left( {\frac{{1 - z}}{{1 + z}}} \right)\\
&\qquad + \frac{1}{{m - 1}}\left( {( - 1)^m (1 + z)\left( {\frac{z}{{1 + z}}} \right)^m  + (1 - z)\left( {\frac{z}{{1 - z}}} \right)^m } \right)\\
&\qquad\,- \frac{1}{m}\left( {( - 1)^m z\left( {\frac{z}{{1 + z}}} \right)^m  - z\left( {\frac{z}{{1 - z}}} \right)^m } \right)\\
&\qquad\;+ \frac{2}{m}\sum_{j = 1}^{m - 1} {\frac{1}{{m - j}}\left( {( - 1)^m \left( {\frac{z}{{1 + z}}} \right)^{m - j}  + ( - 1)^j \left( {\frac{z}{{1 - z}}} \right)^{m - j} } \right)} \\
&\qquad\,\;+ \sum_{j = 0}^{m - 1} {\frac{1}{{m - j}}\left( {( - 1)^m \frac{1}{z}\left( {\frac{z}{{1 + z}}} \right)^{m - j}  - ( - 1)^j \frac{1}{z}\left( {\frac{z}{{1 - z}}} \right)^{m - j} } \right)}. 
\end{split}
\end{equation}
\end{lemma}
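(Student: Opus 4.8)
The plan is to reduce the sum to an $m$-fold derivative of the closed form for $F(z,2)$ already recorded in~\eqref{eq.ekwvw5d}, exactly in the spirit of the proof of~\eqref{eq.qigbmpa}. The starting point is the operator identity $\frac{d^m}{dz^m}z^{2k} = \frac{(2k)!}{(2k-m)!}z^{2k-m} = m!\binom{2k}{m}z^{2k-m}$, which gives, for any power series $G(z)=\sum_{k\ge 1}a_k z^{2k}$, the relation $\sum_{k\ge 1}a_k\binom{2k}{m}z^{2k} = \frac{z^m}{m!}\,G^{(m)}(z)$. Applying this with $a_k = 1/\bigl((2k-1)k^2(2k+1)\bigr)$ and using $k^2=(2k)^2/4$, the left-hand side of~\eqref{eq.f7siyvk} equals $\frac{4z^m}{m!}\,\frac{d^m}{dz^m}F(z,2)$, so the entire problem becomes the computation of the $m$-th derivative of the three terms of~\eqref{eq.ekwvw5d}.

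Next I would differentiate the closed form term by term. Writing $4F(z,2) = -\Li_2(z^2) + 2\bigl(z-\tfrac1z\bigr)\arctanh(z) + 2$ (which follows from~\eqref{eq.ekwvw5d} since $\tfrac{1-z^2}{z^2}\,z=\tfrac1z-z$) and $\arctanh(z)=\tfrac12\ln\!\bigl(\tfrac{1+z}{1-z}\bigr)$, the constant contributes nothing for $m\ge 1$. The remaining pieces are handled by Leibniz's rule together with the elementary derivatives $\frac{d^m}{dz^m}\tfrac1z = (-1)^m m!\,z^{-(m+1)}$, $\frac{d^n}{dz^n}\ln(1+z)=(-1)^{n-1}(n-1)!\,(1+z)^{-n}$ and $\frac{d^n}{dz^n}\ln(1-z)=-(n-1)!\,(1-z)^{-n}$ for $n\ge 1$. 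After multiplying by $z^m/m!$ the repeated factors collapse into the blocks $\bigl(\tfrac{z}{1\pm z}\bigr)^{m-j}$ appearing in~\eqref{eq.f7siyvk}; for instance, the term in which all $m$ derivatives fall on the factor $1/z$ inside $-\tfrac2z\arctanh(z)$ produces exactly $\tfrac{(-1)^m}{z}\ln\!\bigl(\tfrac{1-z}{1+z}\bigr)$, while the boundary terms of the Leibniz expansion of the $z\arctanh(z)$ and $\tfrac1z\arctanh(z)$ pieces account for the summands with coefficients $\tfrac1m$ and $\tfrac1{m-1}$.

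The genuinely delicate piece is $-\Li_2(z^2)$. By Lemma~\ref{lem2} (or directly) $\frac{d}{dz}\bigl(-\Li_2(z^2)\bigr)=\tfrac{2}{z}\ln(1-z^2)$, so its $m$-th derivative equals $\frac{d^{m-1}}{dz^{m-1}}\bigl(\tfrac{2}{z}\ln(1-z^2)\bigr)$; a further Leibniz expansion using $\ln(1-z^2)=\ln(1-z)+\ln(1+z)$ generates the double-indexed sums in the last two lines of~\eqref{eq.f7siyvk}. In particular, the summand in which $\ln(1-z^2)$ is left undifferentiated contributes, after multiplication by $z^m/m!$, the leading term $\tfrac{2(-1)^{m-1}}{m}\ln(1-z^2)$, matching~\eqref{eq.f7siyvk}.

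The main obstacle is precisely this bookkeeping: one must carry out two Leibniz expansions (the $\arctanh$ term and the dilogarithm term), reindex them so that the powers $\bigl(\tfrac{z}{1\pm z}\bigr)^{m-j}$ line up, and verify that the accompanying binomial and factorial coefficients combine into the coefficients $\tfrac{1}{m-j}$, $\tfrac{1}{m-1}$ and $\tfrac1m$ displayed in the statement, with the $j=0$ and top-index terms peeled off as the separate logarithmic and power contributions. Once the two expansions are aligned term by term, collecting them yields the stated closed form. Since this computation is routine but lengthy, I would either organize it as a single consolidated Leibniz calculation or, as a shortcut, verify~\eqref{eq.f7siyvk} in reverse by differentiating its right-hand side and checking that the result reproduces $\tfrac{4z^m}{m!}F(z,2)^{(m)}$.
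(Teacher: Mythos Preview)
Your approach is exactly the one taken in the paper: the proof there is the single line ``Differentiate $4F(z,2)$ given in~\eqref{eq.ekwvw5d} $m$ times with respect to $z$,'' and your outline supplies the Leibniz bookkeeping that makes this work. The identification of the left-hand side with $\tfrac{z^m}{m!}\,\tfrac{d^m}{dz^m}\bigl(4F(z,2)\bigr)$ and the term-by-term differentiation you describe are precisely what is intended.
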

\begin{proof}
Differentiate $4F(z,2)$ given in~\eqref{eq.ekwvw5d} $m$ times with respect to $z$.
\end{proof}

\begin{theorem}
If $m$ is a positive integer greater than or equal to $2$, then
\begin{equation}\label{eq.ikwbv9a}
\begin{split}
&\sum_{k = 1}^\infty  {\frac{\binom{2k}m}{{5^k (2k - 1)k^2 (2k + 1)}}} \\
&\qquad= 2\frac{{( - 1)^m }}{m}\ln \left( {\frac{5}{4}} \right) - ( - 1)^m 2\sqrt 5 \ln (\alpha) + \frac{{F_{m - 1} }}{{m - 1}}\frac{1}{{2^{m - 1} }} + \frac{{F_m }}{m}\frac{1}{{2^m }}\\
&\qquad\,+ \frac{2}{m}\sum_{j = 1}^{\left\lfloor {(m - 1)/2} \right\rfloor } {\frac{{L_{m - 2j} }}{{m - 2j}}\frac{1}{{2^{m - 2j} }}}  - \frac{2}{m}\sum_{j = 1}^{\left\lceil {(m - 1)/2} \right\rceil } {\frac{{L_{m - 2j + 1} }}{{m - 2j + 1}}\frac{1}{{2^{m - 2j + 1} }}} \\
&\qquad\;- 5\sum_{j = 0}^{\left\lfloor {(m - 1)/2} \right\rfloor } {\frac{{F_{m - 2j} }}{{m - 2j}}\frac{1}{{2^{m - 2j} }}}  + 5\sum_{j = 1}^{\left\lceil {(m - 1)/2} \right\rceil } {\frac{{F_{m - 2j + 1} }}{{m - 2j + 1}}\frac{1}{{2^{m - 2j + 1} }}} .
\end{split}
\end{equation}
\end{theorem}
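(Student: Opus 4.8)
The plan is to specialize the lemma \eqref{eq.f7siyvk} at $z=1/\sqrt 5$, the value that turns $z^{2k}$ into $5^{-k}$ and hence reproduces the $5^k$ in the denominator of the target series; this is the same device that carried \eqref{eq.qigbmpa} into \eqref{eq.myu60u9}. Once this substitution is made, the entire proof reduces to rewriting every elementary building block on the right of \eqref{eq.f7siyvk} in terms of $\alpha$ and $\beta$ using the Binet forms \eqref{binet}.

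First I would record the substitutions that make the golden section surface. With $z=1/\sqrt 5$ one has $1+z=2\alpha/\sqrt 5$ and $1-z=-2\beta/\sqrt 5$, so that
\begin{equation*}
\frac{z}{1+z}=\frac{-\beta}{2}=\frac{1}{2\alpha},\qquad \frac{z}{1-z}=\frac{\alpha}{2},\qquad \frac{1-z}{1+z}=\beta^2=\alpha^{-2}.
\end{equation*}
Together with $\alpha\beta=-1$, $\alpha^n+\beta^n=L_n$ and $\alpha^n-\beta^n=\sqrt 5\,F_n$, these identities collapse each bracketed expression into a Lucas or Fibonacci number. The two logarithmic terms are immediate: $\ln(1-z^2)=\ln(4/5)$ contributes $2(-1)^m m^{-1}\ln(5/4)$, while $z^{-1}\ln((1-z)/(1+z))=\sqrt 5\,\ln(\alpha^{-2})$ contributes $-(-1)^m 2\sqrt 5\,\ln(\alpha)$, the first two terms of \eqref{eq.ikwbv9a}.

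Next I would dispatch the four algebraic blocks. In the generic summand $(-1)^m(z/(1+z))^{m-j}+(-1)^j(z/(1-z))^{m-j}$ the sign $(-1)^m(-1)^{m-j}=(-1)^j$ coming from the power of $-\beta/2$ pairs the two contributions into $(-1)^j L_{m-j}/2^{m-j}$; the block carrying the extra factor $1/z=\sqrt 5$ behaves identically except that the Lucas combination is replaced by $\alpha^{m-j}-\beta^{m-j}=\sqrt 5\,F_{m-j}$, yielding $-5(-1)^j F_{m-j}/2^{m-j}$. The two isolated terms of exponent exactly $m$ simplify by the factorisations $\alpha\beta^m-\beta\alpha^m=\alpha^{m-1}-\beta^{m-1}=\sqrt 5\,F_{m-1}$ and $\beta^m-\alpha^m=-\sqrt 5\,F_m$, producing the free coefficients $F_{m-1}/((m-1)2^{m-1})$ and $F_m/(m\,2^m)$.

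The one step that requires genuine care, and which I expect to be the main obstacle, is the parity splitting of the two running sums $\sum_j(-1)^j L_{m-j}/((m-j)2^{m-j})$ and $\sum_j(-1)^j F_{m-j}/((m-j)2^{m-j})$. Writing $j=2i$ for the even indices gives the positive $L_{m-2j}$ and $F_{m-2j}$ sums, while $j=2i-1$ for the odd indices flips the sign and produces the $L_{m-2j+1}$ and $F_{m-2j+1}$ sums; the asymmetric upper limits $\lfloor(m-1)/2\rfloor$ and $\lceil(m-1)/2\rceil$ arise precisely because one original sum in \eqref{eq.f7siyvk} starts at $j=0$ and the other at $j=1$. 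Collecting all contributions reproduces \eqref{eq.ikwbv9a} exactly.
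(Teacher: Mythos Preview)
Your proposal is correct and follows exactly the paper's own route: the paper's proof is the single line ``Set $z=1/\sqrt 5$ in~\eqref{eq.f7siyvk}'', and you have simply supplied the computational details of that substitution (the golden-section identities for $z/(1\pm z)$, the Binet reductions to $L_{m-j}$ and $F_{m-j}$, and the parity split of the alternating sums). There is nothing to add or correct.
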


\begin{proof}
Set $z=1/\sqrt 5$ in~\eqref{eq.f7siyvk}.
\end{proof}

\end{document}